\numberwithin{equation}{section}
\newcommand{\calA}{\mathcal{A}}
\newcommand{\calL}{\mathcal{L}}
\newcommand{\calW}{\mathcal{W}}
\newcommand{\mC}{\mathbb{C}}
\newcommand{\mD}{\mathbb{D}}
\newcommand{\mF}{\mathbb{F}}
\newcommand{\mR}{\mathbb{R}}
\newcommand{\mS}{\mathbb{S}}
\newcommand{\mT}{\mathbb{T}}
\newcommand{\mZ}{\mathbb{Z}}
\newcommand{\inv}{{\textrm{inv }}}
\newcommand{\nm}{\,\rule[-.6ex]{.13em}{2.3ex}\,}
\newtheorem{theorem}{Theorem}[section]
\newtheorem{lemma}[theorem]{Lemma}
\newtheorem{corollary}[theorem]{Corollary}
\newtheorem{proposition}[theorem]{Proposition}
\theoremstyle{definition}
\newtheorem{remark}[theorem]{Remark}
\theoremstyle{definition}
\newtheorem{definition}[theorem]{Definition}
\theoremstyle{definition}
\begin{document}

\keywords{}

\subjclass{Primary 93B36; Secondary 93D15, 46J15}

\title[Extension of the $\nu$-metric]{Extension of the $\nu$-metric}

\author{Joseph A. Ball}
\address{Department of Mathematics,
Virginia Tech.,
Blacksburg,  VA 24061,
USA.}
\email{joball@math.vt.edu}

\author{Amol J. Sasane}
\address{Department of Mathematics, Royal Institute of Technology,
    Stockholm, Sweden.}
\email{sasane@math.kth.se}

\begin{abstract}
  We extend the $\nu$-metric introduced by Vinnicombe
  in robust control theory for rational plants to the case of
  infinite-dimensional systems/classes of nonrational transfer
  functions. 
\end{abstract}

\maketitle

\section{Introduction}

\noindent The general {\em stabilization problem} in control theory is as
follows. Suppose that $R$ is a commutative integral domain with identity
(thought of as the class of stable transfer functions) and let
$\mF(R)$ denote the field of fractions of $R$. The stabilization
problem is:

\medskip

\begin{center}
\parbox[r]{11cm}{Given $P\in (\mF(R))^{p\times
  m}$ (an unstable plant transfer function),

 find $C \in (\mF(R))^{m\times p}$ (a stabilizing controller
  transfer function),

 such that (the closed loop transfer function)
$$
H(P,C):= \left[\begin{array}{cc} P \\ I \end{array} \right]
(I-CP)^{-1} \left[\begin{array}{cc} -C & I \end{array} \right]
$$
belongs to $R^{(p+m)\times (p+m)}$ (is stable).}
\end{center}

\medskip

\noindent Recipes for constructing such $C$ is a central theme in
control theory; see for example the book by Vidyasagar \cite{Vid}.

However, in the {\em robust stabilization problem}, one goes a step
further. One knows that the plant is just an approximation of reality,
and so one would really like the controller $C$ to not only stabilize
the {\em nominal} plant $P_0$, but also all sufficiently close plants
$P$ to $P_0$.  The question of what one means by ``closeness'' of
plants thus arises naturally. So one needs a function $d$ defined on
pairs of stabilizable plants such that
\begin{enumerate}
\item $d$ is a metric on the set of all stabilizable
plants,
\item $d$ is amenable to computation, and
\item $d$ has ``good''properties in the robust stabilization problem.
\end{enumerate}
Such a desirable metric, was introduced by Glenn Vinnicombe in
\cite{Vin} and is called the $\nu$-{\em metric}. In that paper,
essentially $R$ was taken to be the rational functions without poles in the
closed unit disk  or, more generally,  the disk algebra, and the most important results
were that the $\nu$-metric is indeed a metric on the set of
stabilizable plants, and moreover, it has the following nice property
in the context of the robust stabilization problem:

\medskip

\parbox[r]{12cm}{(P): If the $\nu$-metric between two stabilizable
  plants $P_0$ and $P$ is less than the stability margin $\mu_{P_0,C}$
  of $P_0$ and its stabilizing controller $C$, then $C$ also stabilizes $P$.}

\medskip

\noindent The problem of what happens when $R$ is some other ring of
stable transfer functions of infinite-dimensional (that is, one time
axis and infinite-dimensional state space) or multidimensional
systems (several ``time'' axes of evolution) was left open. This problem of extending the $\nu$-metric from
the rational case to transfer function classes of infinite-dimensional
systems was also mentioned in article by Nicholas Young \cite{You}. In
this article, we address this issue of extending the $\nu$-metric.

The starting point for our approach is abstract:  we suppose that
 $R$ is any commutative  integral domain with identity which is a subset of a Banach algebra
$S$ satisfying certain assumptions, which we label (A1)-(A4). We then
define an ``abstract'' $\nu$-metric in this setup, and show that it does
define a metric on the class of all stabilizable plants. We also show
that it has the desired property (P) in the context of robust
stabilization for an appropriate definition of stability margin $\mu_{P_0,C}$.

Next we give several examples of integral domains $R$ arising as natural
classes of stable transfer functions of infinite-dimensional and
multidimensional systems which satisfy the abstract assumptions (A1)
to (A4). In particular, we cover the case of full subalgebras of the
disk algebra, the causal almost periodic function classes, the class
of measures on $[0 ,+\infty)$ without a singular nonatomic part, and
the polydisk algebra.

The paper is organized as follows:
\begin{enumerate}
\item In Section \ref{section_set_up}, we give our general setup and
  assumptions, and define the abstract metric $d_\nu$.
\item In Section \ref{section_metric}, we will show that $d_\nu$ is a
  metric on the set of stabilizable plants.
\item In Section \ref{section_robustness}, we introduce a notion of stability margin $\mu_{P,C}$
and  prove Theorem~\ref{theorem_rst};  this implies in particular that if the
  $\nu$-metric between two stabilizable plants $P_0$ and $P$ is less
  than the stability margin $\mu_{P_0,C}$ of $P$ and its stabilizing controller
  $C$, then $C$ also stabilizes $P$.
\item In Section \ref{section_applications}, we specialize
  $R$ to concrete rings of stable transfer functions of various types,
  and show that our abstract assumptions hold in these particular
  cases.
  \item The final Section \ref{section_further} mentions a loose end which is a direction for further work.
\end{enumerate}

\section{General setup and assumptions}
\label{section_set_up}

\noindent Our setup is the following:
\begin{itemize}
\item[(A1)] $R$ is commutative integral domain with identity.
\item[(A2)] $S$ is a unital commutative complex semisimple Banach
  algebra with an involution $\cdot^*$, such that $R \subset S$.  We
  use $\inv S$ to denote the invertible elements of $S$.
\item[(A3)] There exists a map $\iota: \inv S \rightarrow G$, where
  $(G,+)$ is an Abelian group with identity denoted by $\circ$, and
  $\iota$ satisfies
\begin{itemize}
\item[(I1)] $\iota(ab)= \iota (a) +\iota(b)$ ($a,b \in \inv S$).
\item[(I2)] $\iota(a^*)=-\iota(a)$ ($a\in \inv S$).
\item[(I3)] $\iota$ is locally constant, that is, $\iota$ continuous
  when $G$ is equipped with the discrete topology.
\end{itemize}
\item[(A4)] $x\in R \cap (\inv S)$ is invertible as an element of $R$
  iff $\iota(x)=\circ$.
\end{itemize}

\noindent A consequence of (I3) is the following ``homotopic
invariance of the index'', which we will use in the sequel.

\begin{proposition}
\label{prop_hom_inv}
If $H:[0,1] \rightarrow \textrm{\em inv } S$ is a continuous map, then
$$
\iota (H(0))=\iota(H(1)).
$$
\end{proposition}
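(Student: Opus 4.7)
The plan is to exploit the fact that assumption (I3) turns $\iota$ into a genuinely continuous function from $\inv S$ to the discrete group $G$, and then to invoke the standard topological principle that a continuous image of a connected space in a discrete space must be a singleton.

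First, I would form the composition $\iota \circ H \colon [0,1] \to G$, where $G$ carries the discrete topology. By hypothesis $H$ is continuous as a map into $\inv S$ (with its norm topology inherited from the Banach algebra $S$), and (I3) says that $\iota \colon \inv S \to G$ is continuous when the target is discrete. Composition of continuous maps being continuous, $\iota \circ H$ is continuous from $[0,1]$ to the discrete space $G$.

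Next, I would observe that $[0,1]$ is connected, so its continuous image $(\iota \circ H)([0,1])$ must be a connected subset of $G$. In a discrete space, the only connected subsets are the singletons, hence $\iota \circ H$ is constant on $[0,1]$. Evaluating at the endpoints $0$ and $1$ then gives $\iota(H(0)) = \iota(H(1))$, as desired.

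There is essentially no obstacle here: the entire content of the proposition is a restatement of (I3) combined with the connectedness of $[0,1]$. If one preferred a more elementary argument avoiding any appeal to connectedness as a named principle, one could instead cover $[0,1]$ by the open sets $\{t : \iota(H(t)) = g\}$ for $g \in G$ (each of which is open because $\iota \circ H$ is continuous into the discrete $G$), note that these sets partition $[0,1]$, and conclude from the connectedness of the interval that only one of them is nonempty. Either formulation is short and routine.
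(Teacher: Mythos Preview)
Your proof is correct and follows essentially the same approach as the paper: compose $\iota$ with $H$, use (I3) to get continuity into the discrete space $G$, and invoke connectedness of $[0,1]$ to conclude the image is a singleton. The paper's argument is virtually identical, down to noting that the only connected subsets of a discrete space are singletons.
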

\begin{proof}
  The map $h$, given by $t \mapsto \iota(H(t)): [0,1] \rightarrow G$
  is continuous. Here $[0,1]$ is equipped with usual topology from
  $\mR$, while $G$ is equipped with the discrete topology, given by
  the metric
$$
 d(x,y)=
 \left\{ \begin{array}{ll}
 1 & \textrm{if } x\neq y,\\
 0 & \textrm{if } x= y,
 \end{array} \right.  \quad (x,y \in G).
 $$
 The image of the connected set $[0,1]$ under the continuous map $h$
 is connected. But the only connected subsets of $G$ are the singleton
 sets, since $G$ is carrying the discrete topology. Hence
 $\iota(H(0))=\iota(H(1)$.
\end{proof}

We recall the following standard definitions from the factorization
approach to control theory.

\begin{definition}
$\;$

\noindent {\bf The notation $\mF(R)$:} $\mF(R)$ denotes the field of
  fractions of $R$.

\medskip

\noindent {\bf The notation $F^*$:} If $F\in R^{p\times m}$, then $F^*\in
  S^{m\times p}$ is the matrix with the entry in the $i$th row and
  $j$th column given by $F_{ji}^*$, for all $1\leq i\leq p$, and all $
  1\leq j \leq m$.

\medskip

\noindent {\bf Right coprime/normalized coprime factorization:} Given
a matrix $P \in (\mF(R))^{p\times m}$, a factorization $P=ND^{-1}$,
where $N,D$ are matrices with entries from $R$, is called a {\em right
  coprime factorization of} $P$ if there exist matrices $X, Y$ with
entries from $R$ such that $ X N + Y D=I_m$.  If moreover there holds
that $ N^{*} N +D^{*} D =I_m$, then the right coprime factorization is
referred to as a {\em normalized} right coprime factorization of $P$.

\medskip

\noindent {\bf Left coprime/normalized coprime factorization:}
Similarly, a factorization $P=\widetilde{D}^{-1}\widetilde{N}$, where
$\widetilde{N},\widetilde{D}$ are matrices with entries from $R$, is
called a {\em left coprime factorization of} $P$ if there exist
matrices $\widetilde{X}, \widetilde{Y}$ with entries from $R$ such
that $ \widetilde{N} \widetilde{X}+\widetilde{D} \widetilde{Y}=I_p.  $
If moreover there holds that $ \widetilde{N} \widetilde{N}^{*}
+\widetilde{D}\widetilde{D}^{*}=I_p, $ then the left coprime
factorization is referred to as a {\em normalized} left coprime
factorization of $P$.  We note that the existence of both a left and
right normalized factorization $P = N D^{-1} = \widetilde{D}^{-1}
\widetilde{N}$ for $P$ leads immediately to a normalized {\em double
  coprime factorization} of $P$, i.e., one has the identity
  \begin{equation}  \label{doublecoprime}
  \begin{bmatrix} N^*    & D^* \\ -\widetilde{D} & \widetilde{N} \end{bmatrix}
  \begin{bmatrix} N & -\widetilde{D}^* \\ D & \widetilde{N}^* \end{bmatrix}
  = \begin{bmatrix} I & 0 \\ 0 & I \end{bmatrix}.
  \end{equation}
  Since we are dealing with finite matrices over a commutative ring,
  \eqref{doublecoprime} implies also the identity
  \begin{equation}  \label{doublecoprime'}
  \begin{bmatrix} N & -\widetilde{D}^* \\ D & \widetilde{N}^* \end{bmatrix}
  \begin{bmatrix} N^*    & D^* \\ -\widetilde{D} & \widetilde{N} \end{bmatrix} =
   \begin{bmatrix} I & 0 \\ 0 & I \end{bmatrix}.
   \end{equation}

\medskip

\noindent {\bf The notation $G, \widetilde{G}, K,\widetilde{K}$:}
Given $P \in (\mF(R))^{p\times m}$ with normalized right and left
factorizations $P=N D^{-1}$ and $P= \widetilde{D}^{-1} \widetilde{N}$,
respectively, we introduce the following matrices with entries from
$R$:
  $$
  G=\left[ \begin{array}{cc} N \\ D \end{array} \right] \quad
  \textrm{and} \quad
  \widetilde{G}=\left[ \begin{array}{cc} -\widetilde{D} &
  \widetilde{N} \end{array} \right] .
  $$
  In this notation the fact that the left and right coprime
  factorizations of $P$ are normalized translates to
  \begin{equation} \label{norcp}
  G^*G = I, \quad \widetilde{G} \widetilde{G}^* = I
  \end{equation}
  and the identity \eqref{doublecoprime'} assumes the form
  \begin{equation}  \label{dcp}
  G G^* + \widetilde{G}^* \widetilde{G} = I.
  \end{equation}
  Similarly, given $C \in (\mF(R))^{m\times p}$ with normalized right
  and left factorizations $C=N_C D_C^{-1}$ and $C=
  \widetilde{D}_C^{-1} \widetilde{N}_C$, respectively, we introduce
  the following matrices with entries from $R$:
  $$
  K=\left[ \begin{array}{cc} D_C \\ N_C \end{array} \right] \quad
  \textrm{and} \quad
  \widetilde{K}=\left[ \begin{array}{cc}
      -\widetilde{N}_C & \widetilde{D}_C \end{array} \right] .
  $$

\medskip

\noindent {\bf The notation $\mS(R,p, m)$:} We denote by $\mS(R,p, m)$ the
  set of all elements  $P\in (\mF(R))^{p\times m}$ that posses a normalized
  right coprime factorization and a normalized left coprime
  factorization.

\end{definition}

\begin{remark}
Given $P \in (\mF(R))^{p\times m}$ and $C\in (\mF(R))^{m\times p}$,
define the {\em closed loop transfer function}
$$
H(P,C):= \left[\begin{array}{cc} P \\ I \end{array} \right]
(I-CP)^{-1} \left[\begin{array}{cc} -C & I \end{array} \right] \in
(\mF(R))^{(p+m)\times (p+m)}.
$$
It can be shown (see for example \cite[Chapter 8]{Vid}) that if $ P\in
\mS(R,p,m)$, then $P$ is a {\em stabilizable plant}, that is,
\begin{equation}   \label{dc/stab}
\mS(R,p,m)
 \subset \left\{ P\in (\mF(R))^{p\times m} \bigg|
\begin{array}{ll}
   \exists C\in (\mF(R))^{m\times p} \textrm{ such that} \\
   H(P,C)\in R^{(p+m)\times (p+m)}
\end{array}
\right\}.
\end{equation}
It was shown by A. Quadrat \cite[Theorem 6.3]{Qua} that if the Banach
algebra $R$ is a projective-free ring, then every stabilizable plant
admits a right coprime factorization and a left coprime factorization,
that is, the reverse containment $\supset$ and hence equality holds in
\eqref{dc/stab}.
\end{remark}

We will need a couple of straightforward results on coprime
factorizations, which we have listed below.  The first lemma says that
coprime factorizations are unique up to invertibles.

\goodbreak

\begin{lemma}
Let $P\in (\mF(R))^{p\times m}$.
\begin{enumerate}
\item If $P$ has right coprime factorizations $ P= N_1 D_1^{-1}=N_2
  D_2^{-1}$, then there exist $V, \Lambda \in R^{m\times m}$ such that
  $ V\Lambda =\Lambda V =I_m$, $N_1 =N_2 V$ and $D_1= D_2 V$.
\item If $P$ has left coprime factorizations $ P= \widetilde{D}_1^{-1}
  \widetilde{N}_1=\widetilde{D}_2^{-1}\widetilde{N}_2$, then there
  exist $\widetilde{V}, \widetilde{\Lambda} \in R^{p\times p}$ such
  that $ \widetilde{V}\widetilde{\Lambda} =\widetilde{\Lambda}
  \widetilde{V} =I_p$, $\widetilde{N}_1 =\widetilde{V}\widetilde{N}_2$
  and $\widetilde{D}_1= \widetilde{V}\widetilde{D}_2 $.
\end{enumerate}
\end{lemma}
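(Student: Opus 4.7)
The plan is to reduce the statement to the standard uniqueness-up-to-invertibles argument for coprime factorizations over a commutative integral domain, using the Bezout identities supplied by right (resp.\ left) coprimeness.

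For part (1), I would start by rewriting the equality of factorizations $N_1 D_1^{-1} = N_2 D_2^{-1}$, which a priori lives in $(\mF(R))^{p\times m}$, as the matrix identity $N_1 D_2 = N_2 D_1$ in $R^{p\times m}$. The natural candidates for the transition matrices are
$$
V := D_2^{-1} D_1 \in (\mF(R))^{m\times m}, \qquad \Lambda := D_1^{-1} D_2 \in (\mF(R))^{m\times m}.
$$
The core step is to promote $V$ and $\Lambda$ from $\mF(R)$ to $R$. For $V$, take Bezout witnesses $X_2, Y_2$ for the second coprime factorization, so $X_2 N_2 + Y_2 D_2 = I_m$. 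Multiplying on the right by $V$ and using that $N_2 D_2^{-1} = P = N_1 D_1^{-1}$ gives $N_2 V = N_2 D_2^{-1} D_1 = N_1 D_1^{-1} D_1 = N_1$ and of course $D_2 V = D_1$, so
$$
V = X_2 (N_2 V) + Y_2 (D_2 V) = X_2 N_1 + Y_2 D_1 \in R^{m\times m}.
$$
By the symmetric argument applied to the Bezout witnesses $X_1, Y_1$ for $(N_1, D_1)$, one gets $\Lambda = X_1 N_2 + Y_1 D_2 \in R^{m\times m}$, together with $N_1 \Lambda = N_2$ and $D_1 \Lambda = D_2$. Finally, $V\Lambda = D_2^{-1} D_1 D_1^{-1} D_2 = I_m$ and likewise $\Lambda V = I_m$, giving mutual inverses.

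Part (2) is entirely analogous, with sides reversed: from $\widetilde D_1^{-1} \widetilde N_1 = \widetilde D_2^{-1} \widetilde N_2$ one derives $\widetilde D_2 \widetilde N_1 = \widetilde D_1 \widetilde N_2$, and the natural candidates become $\widetilde V := \widetilde D_1 \widetilde D_2^{-1}$ and $\widetilde\Lambda := \widetilde D_2 \widetilde D_1^{-1}$. Using the left Bezout identities $\widetilde N_i \widetilde X_i + \widetilde D_i \widetilde Y_i = I_p$ one shows $\widetilde V, \widetilde\Lambda \in R^{p\times p}$, and verifies $\widetilde N_1 = \widetilde V \widetilde N_2$, $\widetilde D_1 = \widetilde V \widetilde D_2$, $\widetilde V \widetilde\Lambda = \widetilde\Lambda \widetilde V = I_p$.

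There is no real obstacle here: the only subtlety is the integrality step, i.e.\ showing that the rational matrices $V$ and $\Lambda$ in fact have entries in $R$. This is precisely where the coprimeness hypothesis is used, via the Bezout identity; without it, one would only get equivalence of the factorizations over $\mF(R)$. Everything else is a routine manipulation that works entirely within the ring $R$ (commutativity of $R$ is used freely, but no assumption beyond (A1) is needed for this lemma).
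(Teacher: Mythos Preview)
Your argument is correct and is the standard proof of this well-known fact. The paper does not actually supply a proof of this lemma: it introduces it as one of ``a couple of straightforward results on coprime factorizations'' and states it without proof, so there is nothing to compare against beyond noting that your Bezout-based integrality step is exactly the expected one.
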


In the case of normalized coprime factorizations, the invertibles can
be chosen to be unitary.

\begin{lemma}
\label{lemma_ncfr}
Let $P\in (\mF(R))^{p\times m}$.
\begin{enumerate}
\item If $P$ has normalized right coprime factorizations $P\!=\! N_1
  D_1^{-1}\!\!=\!N_2 D_2^{-1}\!\!$, then there exists a $U \in
  R^{m\times m}$, which is invertible as an element of $R^{m\times
    m}$, and such that $ U^* U =UU^* =I_m$, $N_1 =N_2 U$ and $D_1= D_2
  U$.
\item If $P$ has normalized left coprime factorizations $ P=
  \widetilde{D}_1^{-1}\widetilde{N}_1=\widetilde{D}_2^{-1}\widetilde{N}_2$,
  then there exists a $\widetilde{U} \in R^{p\times p}$ which is
  invertible as an element of $R^{p\times p}$, and such that $
  \widetilde{U}^{*}\widetilde{U} =\widetilde{U}\widetilde{U}^* =I_p$,
  $\widetilde{N}_1 =\widetilde{U}\widetilde{N}_2$ and
  $\widetilde{D}_1= \widetilde{U}\widetilde{D}_2 $.
\end{enumerate}
\end{lemma}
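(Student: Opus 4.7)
The plan is to bootstrap the preceding lemma: it already delivers an invertible intertwining matrix $V\in R^{m\times m}$, and one only has to show that normalization forces $V$ to be unitary. Applying the previous lemma to the two right coprime factorizations $N_1 D_1^{-1}$ and $N_2 D_2^{-1}$ gives $V,\Lambda\in R^{m\times m}$ with $V\Lambda=\Lambda V=I_m$ and
\[
N_1 = N_2 V,\qquad D_1 = D_2 V.
\]
My candidate is $U:=V$; everything reduces to verifying $U^*U=UU^*=I_m$.

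First I would substitute these relations into the normalization identity for the first factorization: since $N_1^*N_1+D_1^*D_1=I_m$, pulling out $V^*$ on the left and $V$ on the right yields
\[
I_m \;=\; V^*\bigl(N_2^*N_2 + D_2^*D_2\bigr)V \;=\; V^*V,
\]
where the last equality uses the normalization of the second factorization. Thus $V^*V=I_m$ as an identity in $S^{m\times m}$. Because $V$ already has a two-sided inverse $\Lambda\in R^{m\times m}$, left-multiplying $V^*V=I_m$ by $\Lambda$ gives $V^*=\Lambda$; in particular $V^*\in R^{m\times m}$, and
\[
VV^* \;=\; V\Lambda \;=\; I_m.
\]
Setting $U=V$ then gives a unitary element of $R^{m\times m}$, invertible as an element of $R^{m\times m}$, with $N_1=N_2U$ and $D_1=D_2U$, as required.

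The left coprime factorization case is mirror-symmetric and would be handled in parallel: the previous lemma supplies $\widetilde{V},\widetilde{\Lambda}\in R^{p\times p}$ with $\widetilde{V}\widetilde{\Lambda}=\widetilde{\Lambda}\widetilde{V}=I_p$, $\widetilde{N}_1=\widetilde{V}\widetilde{N}_2$ and $\widetilde{D}_1=\widetilde{V}\widetilde{D}_2$, and now substituting into $\widetilde{N}_1\widetilde{N}_1^*+\widetilde{D}_1\widetilde{D}_1^*=I_p$ and using the normalization of the second factorization yields $\widetilde{V}\widetilde{V}^*=I_p$, whence $\widetilde{V}^*=\widetilde{\Lambda}$ and $\widetilde{V}^*\widetilde{V}=I_p$. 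There is no genuine obstacle here; the only subtle point worth flagging is that the starred identities live a priori in $S^{m\times m}$ (respectively $S^{p\times p}$) rather than in $R^{m\times m}$ (resp.\ $R^{p\times p}$), but the argument actually produces $U^*$ (resp.\ $\widetilde{U}^*$) inside $R^{m\times m}$ (resp.\ $R^{p\times p}$) as a by-product.
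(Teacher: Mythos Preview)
The paper does not actually supply a proof of this lemma; it is listed among a couple of ``straightforward results on coprime factorizations'' and left to the reader. Your argument is precisely the natural one the paper is implicitly invoking: apply the preceding lemma to obtain the invertible $V$, substitute into the normalization identity for the first factorization, and use the normalization of the second to get $V^*V=I_m$; then exploit the two-sided inverse $\Lambda$ of $V$ in $R^{m\times m}$ to identify $V^*=\Lambda$ and conclude $VV^*=I_m$.

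One small slip worth fixing: to pass from $V^*V=I_m$ to $V^*=\Lambda$ you need to \emph{right}-multiply by $\Lambda$, not left-multiply. Right-multiplication gives $V^*(V\Lambda)=\Lambda$, and since $V\Lambda=I_m$ this is $V^*=\Lambda$. Left-multiplying instead yields $\Lambda V^* V=\Lambda$, which does not simplify without already knowing what $\Lambda V^*$ is. With that one-word correction the argument is complete and your remark that $V^*$ ends up in $R^{m\times m}$ (not merely in $S^{m\times m}$) is exactly the right point to flag.
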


\begin{lemma}
\label{lemma_fth}
Suppose that $F\in R^{m\times m}$, $\det F \in \textrm{\em inv }S$ and
$\iota( \det F)=\circ$. Then $F$ is invertible as an element of $
R^{m\times m}$.
\end{lemma}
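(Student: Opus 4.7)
The plan is to reduce the statement about the invertibility of the matrix $F$ to the invertibility of the scalar element $\det F$, and then invoke assumption (A4) directly.

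First, since $R$ is a commutative ring by (A1), the element $\det F$ is well defined as an element of $R$, and Cramer's rule / the adjugate identity
\[
F \cdot \operatorname{adj}(F) = \operatorname{adj}(F) \cdot F = (\det F)\, I_m
\]
holds in $R^{m\times m}$, with $\operatorname{adj}(F) \in R^{m\times m}$. Therefore $F$ will be invertible as an element of $R^{m\times m}$ as soon as $\det F$ is invertible as an element of $R$, for then $F^{-1} = (\det F)^{-1} \operatorname{adj}(F) \in R^{m\times m}$.

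Next, I would apply hypothesis (A4) to the scalar $x := \det F$. We are given that $\det F \in \operatorname{inv} S$ and that $\iota(\det F) = \circ$, which are exactly the two conditions in (A4). Hence $\det F$ is invertible in $R$, and combining this with the adjugate identity above completes the argument.

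There is essentially no obstacle here: once the reduction to the scalar case via $\operatorname{adj}(F)$ is observed, (A4) does all the work. The only minor point worth noting is that the adjugate formula requires commutativity of $R$, which is given by (A1), so the argument really is a direct two-line deduction.
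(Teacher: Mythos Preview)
Your proposal is correct and follows essentially the same approach as the paper: apply (A4) to conclude that $\det F$ is invertible in $R$, and then use Cramer's rule (the adjugate identity) to deduce that $F$ is invertible in $R^{m\times m}$.
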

\begin{proof} Since $\det F \in \inv S$ and $\iota( \det F)=\circ$, it
  follows from (A4) that $\det F $ is invertible as an element of $R$.
  The result then follows from Cramer's rule.
\end{proof}

We now define the metric $d_\nu$ on $\mS(R, p, m)$. But first we
specify the norm we use for matrices with entries from $S$.

\begin{definition}[$\|\cdot \|_{\infty}$]\label{def_sup_norm}
  Let $\mathfrak{M}$ denote the maximal ideal space of the Banach
  algebra $S$.  For a matrix $M \in S^{p\times m}$, we set
\begin{equation}
\label{norm}
\|M\|_{\infty}= \max_{\varphi \in \mathfrak{M}} \nm {\mathbf M}(\varphi) \nm.
\end{equation}
Here ${\mathbf M}$ denotes the entry-wise Gelfand transform of $M$,
and $\nm \cdot \nm$ denotes the induced operator norm from $\mC^{m}$
to $\mC^{p}$. For the sake of concreteness, we fix the standard
Euclidean norms on the vector spaces $\mC^{m}$ to $\mC^{p}$.
\end{definition}

The maximum in \eqref{norm} exists since $\mathfrak{M}$ is a compact
space when it is equipped with Gelfand topology, that is, the
weak-$\ast$ topology induced from $\calL( S; \mC)$. Since we have
assumed $S$ to be semisimple, the Gelfand transform
$$
\widehat{\;\cdot\;}:S \rightarrow\widehat{S}\; ( \subset
C(\mathfrak{M},\mC))
$$
is an isomorphism. If $M\in S^{1\times 1}=S$, then we note that there
are two norms available for $M$: the one as we have defined above,
namely $\|M\|_{\infty}$, and the norm $\|\cdot\|$ of $M$ as an element
of the Banach algebra $S$. But throughout this article, we will use
the norm given by \eqref{norm}.

\begin{definition}[Abstract $\nu$-metric $d_\nu$]\label{def_nu_metric}
  For $P_1, P_2 \in \mS(R,p,m)$, with the normalized left/right
  coprime factorizations
\begin{eqnarray*}
P_1&=& N_{1} D_{1}^{-1}= \widetilde{D}_{1}^{-1} \widetilde{N}_{1},\\
P_2&=& N_{2} D_{2}^{-1}= \widetilde{D}_{2}^{-1} \widetilde{N}_{2},
\end{eqnarray*}
we define
\begin{equation}
\label{eq_nu_metric}
d_{\nu} (P_1,P_2 ):=\left\{
\begin{array}{ll}
  \|\widetilde{G}_{2} G_{1}\|_{\infty} &
  \textrm{if } \det(G_1^* G_2) \in \inv S \textrm{ and }
  \iota (\det (G_1^* G_2))=\circ, \\
  1 & \textrm{otherwise}. \end{array}
\right.
\end{equation}
\end{definition}

Normalized coprime factorizations are not unique for a given plant in $\mS(R,p,m)$.  But we have the following:

\begin{lemma}
\label{lemma_wd}
$d_\nu$ given by \eqref{eq_nu_metric} is well-defined.
\end{lemma}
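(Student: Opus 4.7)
The plan is to invoke Lemma~\ref{lemma_ncfr} to reduce well-definedness to the following single computation: if $P_1,P_2\in\mS(R,p,m)$ are given two choices of normalized coprime factorizations, the associated quadruples $(G_i,\widetilde G_i)$ and $(G_i',\widetilde G_i')$ are related by
$$
G_i' = G_i U_i, \qquad \widetilde G_i' = \widetilde U_i \widetilde G_i,
$$
where $U_i\in R^{m\times m}$ and $\widetilde U_i\in R^{p\times p}$ are invertible in $R$ and satisfy $U_i^*U_i=U_iU_i^*=I_m$, $\widetilde U_i^*\widetilde U_i=\widetilde U_i\widetilde U_i^*=I_p$. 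One then has to check two things: that the case distinction in \eqref{eq_nu_metric} is the same for both choices, and that in the first branch the numerical value of $\|\widetilde G_2 G_1\|_\infty$ is unchanged.

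For the case distinction, I would compute
$$
G_1'^{*}G_2' = U_1^{*}\,(G_1^{*}G_2)\,U_2,
$$
so $\det(G_1'^{*}G_2') = (\det U_1)^{*}\cdot\det(G_1^{*}G_2)\cdot\det U_2$. Since $U_i$ is invertible in $R^{m\times m}$, Cramer's rule gives that $\det U_i$ is a unit in $R$, so in particular $\det U_i\in\inv S$ with $\iota(\det U_i)=\circ$ by (A4); combined with (I2) this also gives $\iota((\det U_1)^{*})=-\iota(\det U_1)=\circ$. Hence $\det(G_1'^{*}G_2')\in\inv S$ iff $\det(G_1^{*}G_2)\in\inv S$, and, using (I1),
$$
\iota(\det(G_1'^{*}G_2')) = \iota((\det U_1)^{*}) + \iota(\det(G_1^{*}G_2)) + \iota(\det U_2) = \iota(\det(G_1^{*}G_2)),
$$
so both factorizations land in the same branch of \eqref{eq_nu_metric}.

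For the numerical value in the first branch, I would write $\widetilde G_2' G_1' = \widetilde U_2\,(\widetilde G_2 G_1)\,U_1$ and argue that at every maximal ideal $\varphi\in\mathfrak{M}$ the Gelfand transforms $\widehat{U_1}(\varphi)$ and $\widehat{\widetilde U_2}(\varphi)$ are unitary complex matrices (of sizes $m\times m$ and $p\times p$), so that left multiplication by $\widehat{\widetilde U_2}(\varphi)$ and right multiplication by $\widehat{U_1}(\varphi)$ preserve the Euclidean operator norm. Taking the supremum over $\varphi$ then gives $\|\widetilde G_2' G_1'\|_\infty=\|\widetilde G_2 G_1\|_\infty$.

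The main obstacle is the last step: unitarity of $U_i$ and $\widetilde U_i$ in $R^{m\times m}$, $R^{p\times p}$ is an identity involving the abstract involution $\cdot^{*}$, whereas preservation of the operator norm requires the \emph{pointwise} Gelfand-transformed matrices to be unitary in the Hilbert-space sense. This needs the compatibility $\widehat{a^{*}}(\varphi)=\overline{\widehat a(\varphi)}$ between the involution and the Gelfand transform (so that $\widehat{U^{*}}(\varphi)$ is literally the conjugate transpose of $\widehat U(\varphi)$). Once this compatibility is in hand, the remainder of the argument is the routine algebra above, combining multiplicativity of $\det$ with the axioms (I1), (I2), and (A4).
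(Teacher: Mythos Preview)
Your approach is exactly what the paper has in mind: its proof is the single line ``This follows from Lemma~\ref{lemma_ncfr},'' and you have simply written out the details of that implication. The decomposition $G_i'=G_iU_i$, $\widetilde G_i'=\widetilde U_i\widetilde G_i$, the use of (I1), (I2), (A4) to show invariance of the branch condition, and the unitary-invariance of the pointwise operator norm are precisely the intended steps.

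Regarding the obstacle you flag: the compatibility $\widehat{a^{*}}(\varphi)=\overline{\widehat a(\varphi)}$ is indeed not stated explicitly among (A1)--(A4), but the paper tacitly relies on it everywhere the involution meets the norm $\|\cdot\|_\infty$. For instance, Lemma~\ref{lemma_bd_1} deduces $\|G_1\|_\infty=1$ from $G_1^{*}G_1=I_m$, and Lemma~\ref{lemma_0.13} feeds the algebraic identity $\mathbf{G_1^{*}G_2G_2^{*}G_1}+\mathbf{G_1^{*}\widetilde G_2^{*}\widetilde G_2 G_1}=I$ into Lemma~\ref{lemma_0.12}, which is a statement about conjugate transposes. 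So this compatibility is an ambient (unstated) hypothesis of the whole framework, satisfied in every example of Section~\ref{section_applications} since the involution there is pointwise complex conjugation; it is not a defect peculiar to your argument. With that understood, your proof is complete and matches the paper's.
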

\begin{proof} This follows from Lemma \ref{lemma_ncfr}.
\end{proof}

\begin{lemma}
\label{lemma_bd_1}
$d_\nu$ given by \eqref{eq_nu_metric} is bounded above by $1$.
\end{lemma}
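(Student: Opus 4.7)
The plan is to dispose of the ``otherwise'' branch of Definition~\ref{def_nu_metric} first: in that case $d_\nu(P_1,P_2) = 1$ by definition, so the bound is immediate. The content of the lemma is therefore the inequality $\|\widetilde{G}_2 G_1\|_\infty \le 1$ in the remaining case, and this I would obtain as a pointwise estimate on the maximal ideal space $\mathfrak{M}$ of $S$.

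The key input is the normalization $G_1^* G_1 = I$ from \eqref{norcp} for $P_1$ together with the double coprime identity $G_2 G_2^* + \widetilde{G}_2^* \widetilde{G}_2 = I$ from \eqref{dcp} for $P_2$. Computing in $S^{m\times m}$, one gets
\begin{equation*}
(\widetilde{G}_2 G_1)^*(\widetilde{G}_2 G_1) \;=\; G_1^*\,\widetilde{G}_2^* \widetilde{G}_2\, G_1 \;=\; G_1^*(I - G_2 G_2^*) G_1 \;=\; I - G_1^* G_2 G_2^* G_1.
\end{equation*}
Applying the entry-wise Gelfand transform, the same identity holds pointwise on $\mathfrak{M}$ between complex matrices.

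Next I would fix an arbitrary $\varphi \in \mathfrak{M}$ and show that the $m\times m$ matrix $I - (G_1^* G_2 G_2^* G_1)(\varphi)$ has operator norm at most $1$. The matrix $(G_2^* G_1)(\varphi)$ is just a complex $m\times m$ matrix; call it $A$. Then $I - A^* A$ is Hermitian, and for any unit vector $v \in \mC^m$ we have $v^*(I - A^* A)v = 1 - \|A v\|^2 \le 1$. It is also bounded below by $-\|A\|^2 + 1$, but for the upper bound on the operator norm of $(\widetilde{G}_2 G_1)(\varphi)$ we only need
\begin{equation*}
\nm (\widetilde{G}_2 G_1)(\varphi)\, v\nm^2 \;=\; \|v\|^2 - \|A v\|^2 \;\le\; \|v\|^2,
\end{equation*}
which gives $\nm (\widetilde{G}_2 G_1)(\varphi)\nm \le 1$. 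Taking the maximum over $\varphi \in \mathfrak{M}$ as in \eqref{norm} yields $\|\widetilde{G}_2 G_1\|_\infty \le 1$, as required.

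I do not anticipate any real obstacle: the argument is essentially a one-line algebraic manipulation using \eqref{norcp} and \eqref{dcp}, followed by the elementary observation that $I - A^*A$ is a contraction (as a Hermitian matrix whose quadratic form is bounded above by $\|v\|^2$). The only mild care needed is to note that the Gelfand transform preserves the involution on $S$ well enough for the displayed identity to be interpreted pointwise on $\mathfrak{M}$; this is built into the definition \eqref{norm} of $\|\cdot\|_\infty$ and the matrix transpose-conjugate convention in the definition of $F^*$.
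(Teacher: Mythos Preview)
Your argument is correct, but the paper takes a shorter route. Instead of invoking the double-coprime identity \eqref{dcp} and computing $(\widetilde{G}_2 G_1)^*(\widetilde{G}_2 G_1)$, the paper simply uses submultiplicativity $\|\widetilde{G}_2 G_1\|_\infty \le \|\widetilde{G}_2\|_\infty \|G_1\|_\infty$ together with the normalizations $G_1^*G_1 = I_m$ and $\widetilde{G}_2 \widetilde{G}_2^* = I_p$ from \eqref{norcp}, which force $\|G_1\|_\infty = \|\widetilde{G}_2\|_\infty = 1$. Your approach requires the extra input \eqref{dcp} and is longer, but it actually proves more: the pointwise identity you derive is precisely the content of Lemma~\ref{lemma_0.13}, so you are effectively establishing that lemma en route to the bound. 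The paper separates the two facts, proving the bound first by the cheap isometry argument and postponing the singular-value relation to Lemma~\ref{lemma_0.13}. Both proofs rely on the same tacit assumption that the Gelfand transform intertwines the involution on $S$ with complex conjugation, which you rightly flag and which the paper uses throughout without comment.
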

\begin{proof} We have $\|\widetilde{G}_2 G_1\|_{\infty}\leq
  \|\widetilde{G}_2 \|_{\infty}\|G_1\|_{\infty}$. As $G_1^*
  G_1=I_m$ and $\widetilde{G}_2 \widetilde{G}_2^*=I_p$, we see that
  $\|G_1\|_{\infty}=1=\|\widetilde{G}_2 \|_{\infty}$.
\end{proof}

In Section \ref{section_metric}, we will also prove the following.

\begin{theorem}
\label{thm_d_nu_is_a_metric}
$d_\nu$ given by \eqref{eq_nu_metric} is a metric on $\mS(R, p, m)$.
\end{theorem}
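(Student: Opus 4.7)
The plan is to verify the four metric axioms; non-negativity together with the bound $d_\nu \leq 1$ is already in Lemma~\ref{lemma_bd_1}. For the reflexive case $d_\nu(P,P) = 0$, take the same normalized factorizations of $P$ in both slots: $\det(G^*G) = \det I_m = 1$ is invertible in $S$ with $\iota(1) = \circ$ by (I1), while $P = ND^{-1} = \widetilde{D}^{-1}\widetilde{N}$ yields $\widetilde{N}D = \widetilde{D}N$, so $\widetilde{G}G = -\widetilde{D}N + \widetilde{N}D = 0$. Conversely, $d_\nu(P_1,P_2) = 0$ excludes the ``otherwise'' branch and forces $\|\widetilde{G}_2G_1\|_\infty = 0$; semisimplicity of $S$ makes the entrywise Gelfand transform faithful, so $\widetilde{G}_2G_1 = 0$, i.e.\ $\widetilde{D}_2 N_1 = \widetilde{N}_2 D_1$, and multiplying by $\widetilde{D}_2^{-1}$ and $D_1^{-1}$ in $\mF(R)$ gives $P_1 = P_2$.

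For symmetry, the index condition is symmetric because $(G_1^*G_2)^* = G_2^*G_1$, so $\det(G_2^*G_1) = (\det(G_1^*G_2))^*$ and (I2) reverses the sign of $\iota$. The norm equality $\|\widetilde{G}_2G_1\|_\infty = \|\widetilde{G}_1G_2\|_\infty$ is more substantive: the approach is to encode the factorization data as $U_i := [\,G_i \mid \widetilde{G}_i^*\,]$, which is unitary in $S^{(p+m)\times(p+m)}$ by \eqref{norcp} and \eqref{dcp}. Then $U_1^*U_2$ is unitary, and its pointwise Gelfand transform is a unitary complex block matrix whose off-diagonal blocks $\mathbf{G}_1^*\widetilde{\mathbf{G}}_2^*$ and $\widetilde{\mathbf{G}}_1\mathbf{G}_2$ have equal operator norm (a standard CSD identity); combined with invariance of the operator norm under conjugate transpose, this gives the claim.

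The triangle inequality is the main obstacle. Set $\alpha := d_\nu(P_1,P_2)$ and $\beta := d_\nu(P_2,P_3)$; if $\alpha + \beta \geq 1$, Lemma~\ref{lemma_bd_1} concludes, so assume $\alpha + \beta < 1$. The double coprime identity $G_2G_2^* + \widetilde{G}_2^*\widetilde{G}_2 = I$ from \eqref{dcp} yields
$$
\widetilde{G}_3 G_1 = (\widetilde{G}_3G_2)(G_2^*G_1) + (\widetilde{G}_3\widetilde{G}_2^*)(\widetilde{G}_2G_1),
$$
and since $\|G_2^*G_1\|_\infty, \|\widetilde{G}_3\widetilde{G}_2^*\|_\infty \leq 1$ (from $G^*G = I$, $\widetilde{G}\widetilde{G}^* = I$), we obtain $\|\widetilde{G}_3G_1\|_\infty \leq \alpha + \beta$. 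Pointwise invertibility of $\mathbf{G}_1^*\mathbf{G}_3$ on $\mathfrak{M}$ then follows from the unitary-block identity $\sigma_{\min}(\mathbf{G}_1^*\mathbf{G}_3(\varphi))^2 = 1 - \|\widetilde{\mathbf{G}}_1\mathbf{G}_3(\varphi)\|^2$ and the symmetrized bound $\|\widetilde{G}_1 G_3\|_\infty < 1$, giving $\det(G_1^*G_3) \in \inv S$.

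For $\iota(\det(G_1^*G_3)) = \circ$, I would use the straight-line homotopy
$$
H(t) := G_1^*\bigl[\,G_2G_2^* + t\,\widetilde{G}_2^*\widetilde{G}_2\,\bigr]G_3 = (1-t)(G_1^*G_2)(G_2^*G_3) + t\,G_1^*G_3,
$$
which joins $H(0) = (G_1^*G_2)(G_2^*G_3)$ to $H(1) = G_1^*G_3$ in $S^{m\times m}$. Once $\det H(t) \in \inv S$ is established for all $t \in [0,1]$, Proposition~\ref{prop_hom_inv} gives $\iota(\det(G_1^*G_3)) = \iota(\det((G_1^*G_2)(G_2^*G_3))) = \iota(\det(G_1^*G_2)) + \iota(\det(G_2^*G_3)) = \circ$. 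The step I expect to be hardest is the uniform pointwise invertibility of $\mathbf{H}(t)(\varphi)$ along the homotopy; the plan is a Schur-complement/Neumann perturbation argument using the quantitative bounds above together with the strict inequality $\alpha + \beta < 1$.
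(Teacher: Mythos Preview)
Your approach is essentially the paper's, reorganized: the paper splits the triangle inequality into the two cases ``index condition for $(P_1,P_3)$ holds/fails'' and in the second case argues contrapositively, whereas you assume $\alpha+\beta<1$ up front and establish both the norm bound and the index condition directly. The homotopy you write down is exactly the paper's (there written as $\det(A(I+tA^{-1}B))$ with $A=(G_1^*G_2)(G_2^*G_3)$ and $B=G_1^*\widetilde{G}_2^*\widetilde{G}_2 G_3$), and the step you flag as hardest is routine once you use the quantitative form of Lemma~\ref{lemma_0.13}: pointwise on $\mathfrak{M}$ one has $\underline{\sigma}(\mathbf{A})\geq\sqrt{1-\alpha^2}\sqrt{1-\beta^2}$ by (S5) and $\overline{\sigma}(\mathbf{B})\leq\alpha\beta$ by (S4), so $\|A^{-1}B\|_\infty\leq \alpha\beta\big/\sqrt{(1-\alpha^2)(1-\beta^2)}$, and this is $<1$ iff $\alpha^2+\beta^2<1$, which follows from $\alpha+\beta<1$. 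Your direct estimate $\|\widetilde{G}_3G_1\|_\infty\leq\alpha+\beta$ via the splitting through $G_2G_2^*+\widetilde{G}_2^*\widetilde{G}_2=I$ replaces the paper's trigonometric argument in its Case~$1^\circ$; both suffice. Your symmetry argument via the block unitary $U_i=[\,G_i\mid\widetilde{G}_i^*\,]$ is a cleaner packaging of what the paper leaves implicit (namely Lemma~\ref{lemma_0.13} together with the equality of singular values of a matrix and its adjoint).
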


We recall the definition of singular values of a square matrix, and a
few properties which will be needed in the sequel.

\begin{definition}
  If $M \in \mC^{k\times k}$, then the set of eigenvalues of $MM^*$
  and $M^* M$ are equal and the eigenvalues are real. The square roots
  of these eigenvalues are called the {\em singular values of} $M$,
  and the largest of these is denoted by $\overline{\sigma}(M)$, while
  the smallest of these is denoted by $\underline{\sigma}(M)$.
\end{definition}

\begin{proposition}
\label{prop_sv}
The following hold for $P, Q \in \mC^{k \times k}$.
\begin{itemize}
\item[(S1)] $\|P\|= \overline{\sigma}(P)$.
\item[(S2)] If $P$ is invertible, then $\underline{\sigma}(P) >0$, and
  $\|P^{-1}\|= (\underline{\sigma}(P))^{-1}$.
\item[(S3)] $| \underline{\sigma}(P+Q) -\underline{\sigma}(P) | \leq
  \overline{\sigma}(Q)$.
\item[(S4)] $\overline{\sigma}(PQ) \leq \overline{\sigma}(P) \cdot
  \overline{\sigma}(Q)$.
\item[(S5)] $\underline{\sigma}(PQ) \geq \underline{\sigma}(P) \cdot
  \underline{\sigma}(Q)$.
\item[(S6)]
  $\overline{\sigma}(PQ)=\overline{\sigma}((P^*P)^\frac{1}{2} Q)=
  \overline{\sigma}(P(QQ^*)^\frac{1}{2})$.
\end{itemize}
\end{proposition}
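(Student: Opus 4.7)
The plan is to prove the six items in turn, relying on the singular value decomposition and the variational characterizations
\[
\overline{\sigma}(M)=\max_{\|x\|=1}\|Mx\|,\qquad \underline{\sigma}(M)=\min_{\|x\|=1}\|Mx\|,
\]
which follow from diagonalising the positive semidefinite matrices $M^{*}M$ and $MM^{*}$ by the spectral theorem. Item (S1) is then immediate: $\|P\|^{2}=\max_{\|x\|=1}\langle P^{*}Px,x\rangle$ equals the largest eigenvalue of $P^{*}P$, which is $\overline{\sigma}(P)^{2}$. Item (S4) is also immediate from (S1) and the submultiplicativity of the operator norm.

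For (S2), when $P$ is invertible, $P^{*}P$ is strictly positive definite, so all its eigenvalues are positive, giving $\underline{\sigma}(P)>0$. Writing $P=U\Sigma V^{*}$ in its SVD one has $P^{-1}=V\Sigma^{-1}U^{*}$, whose largest singular value is the reciprocal of the smallest diagonal entry of $\Sigma$; combining with (S1) yields $\|P^{-1}\|=1/\underline{\sigma}(P)$. For (S5), if either $\underline{\sigma}(P)$ or $\underline{\sigma}(Q)$ vanishes the inequality is trivial; otherwise for any unit vector $x$,
\[
\|PQx\|\ge \underline{\sigma}(P)\,\|Qx\|\ge \underline{\sigma}(P)\,\underline{\sigma}(Q),
\]
and minimising over $x$ gives the claim.

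For (S3) I would use the variational characterisation of $\underline{\sigma}$. Choosing a unit vector $x_{0}$ that attains the minimum for $P$, one has
\[
\underline{\sigma}(P+Q)\le \|(P+Q)x_{0}\|\le \|Px_{0}\|+\|Qx_{0}\|\le \underline{\sigma}(P)+\overline{\sigma}(Q),
\]
and the reverse inequality follows by applying the same bound with the roles of $P$ and $P+Q$ swapped, since $P=(P+Q)+(-Q)$ and $\overline{\sigma}(-Q)=\overline{\sigma}(Q)$.

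For (S6), set $M=(P^{*}P)^{1/2}$, which is well-defined as the positive semidefinite square root of $P^{*}P$ and satisfies $M^{2}=P^{*}P$. Then
\[
(PQ)^{*}(PQ)=Q^{*}P^{*}PQ=Q^{*}M^{2}Q=(MQ)^{*}(MQ),
\]
so $PQ$ and $MQ$ have the same singular values; in particular $\overline{\sigma}(PQ)=\overline{\sigma}((P^{*}P)^{1/2}Q)$. The second equality is proved analogously by using $(PQ)(PQ)^{*}=P(QQ^{*})P^{*}=(PN)(PN)^{*}$ with $N=(QQ^{*})^{1/2}$. None of these steps presents a serious obstacle; the only minor subtlety is ensuring that the positive square root is a legitimate tool, but this is standard for positive semidefinite matrices via the spectral theorem.
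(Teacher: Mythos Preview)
Your argument is correct and follows essentially the same route as the paper: the paper simply invokes the spectral theorem for (S1)--(S2), cites Bernstein's textbook for (S3)--(S5), and says (S6) can be verified directly from the definition of $\overline{\sigma}$, which is precisely what your computation $(PQ)^{*}(PQ)=(MQ)^{*}(MQ)$ does. Your write-up is just a self-contained unpacking of those citations.
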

\begin{proof} (S1), (S2) follow from the spectral theorem. (S3), (S4),
  (S5) are given in \cite[Proposition 9.6.8, Corollary
  9.6.6]{Ber}. (S6) can be verified directly using the definition of
  $\overline{\sigma}$.
\end{proof}

\begin{lemma}
\label{lemma_0.12}
Suppose that $A, B \in \mC^{p\times m}$ and that $A^* A +B^* B=I$. Then
$ (\underline{\sigma}(A))^2+ (\overline{\sigma}(B))^2 =1$.
\end{lemma}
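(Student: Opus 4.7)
The plan is to exploit the relation $A^*A + B^*B = I_m$ directly at the level of eigenvalues of the Hermitian positive semidefinite $m\times m$ matrices $A^*A$ and $B^*B$. By the convention in the preceding definition, $(\underline{\sigma}(A))^2$ is the smallest eigenvalue of $A^*A$, while $(\overline{\sigma}(B))^2$ is the largest eigenvalue of $B^*B$; the goal is to show these two numbers sum to $1$.

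First I would rewrite the hypothesis as $B^*B = I_m - A^*A$. Because $A^*A$ is Hermitian, it admits an orthonormal eigenbasis $v_1,\dots,v_m$ with real eigenvalues $\lambda_1,\dots,\lambda_m$. Applying $B^*B$ to $v_i$ gives $B^*B\,v_i = v_i - \lambda_i v_i = (1-\lambda_i)v_i$, so each $v_i$ is simultaneously an eigenvector of $B^*B$ with eigenvalue $1-\lambda_i$. Hence the spectrum of $B^*B$ is exactly $\{1-\lambda_i : 1 \le i \le m\}$.

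It then follows that
\[
(\overline{\sigma}(B))^2 \;=\; \max_i (1-\lambda_i) \;=\; 1 - \min_i \lambda_i \;=\; 1 - (\underline{\sigma}(A))^2,
\]
which is the claimed identity. There is no real obstacle; the only substantive observation is the simultaneous diagonalizability of $A^*A$ and $B^*B$ forced by their sum being a scalar multiple of the identity, together with the eigenvalue pairing $\mu_i = 1 - \lambda_i$.
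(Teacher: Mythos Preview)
Your proof is correct and follows essentially the same approach as the paper: both rewrite the hypothesis as $B^*B = I - A^*A$ and invoke the spectral theorem to pair the eigenvalues of $A^*A$ and $B^*B$. The paper phrases this via the Rayleigh-quotient characterization $\langle A^*Ax,x\rangle = 1 - \langle B^*Bx,x\rangle$ for unit vectors $x$, whereas you diagonalize explicitly, but the content is the same.
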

\begin{proof}
  This follows from the spectral theorem. Indeed,  $A^* A=I-B^*
  B$, and so for all $x\in \mC^m$ with unit norm, we have
  $$
  \langle A^* A x , x \rangle = \langle x, x \rangle - \langle B^* B
  x, x\rangle=1-\langle B^* B x, x\rangle.
  $$
  Thus $ (\overline{\sigma}(A))^2=\displaystyle
  \max_{\scriptscriptstyle\|x\|=1}\langle A^* A x , x
  \rangle=1-\min_{\scriptscriptstyle \|x\|=1}\langle B^* B x, x\rangle
  =1- (\underline{\sigma}(B))^2$.
\end{proof}

In particular, we have the following consequence as an application of
this lemma.  (In this article, we often suppress the argument of the
Gelfand transforms of matrices with $S$-entries.)

\begin{lemma}
\label{lemma_0.13}
If $P_1, P_2 \in \mS(R, p, m)$, then $ \big(\underline{\sigma}(
\mathbf{G_2^* G_1} )\big)^2 + \big(\overline{\sigma}(
\mathbf{\widetilde{G}_2 G_1} )\big)^2 = 1 $ pointwise on
$\mathfrak{M}$.
\end{lemma}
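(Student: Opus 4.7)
The plan is to reduce the claim to Lemma~\ref{lemma_0.12} applied pointwise on $\mathfrak{M}$, with $A = \mathbf{G_2^* G_1}$ and $B = \mathbf{\widetilde{G}_2 G_1}$. To do so, I need to verify the hypothesis $A^*A + B^*B = I_m$ at the level of the matrices $G_2^* G_1$ and $\widetilde G_2 G_1$ over $S$, and then pass to the Gelfand transforms entrywise (which is a ring homomorphism on $S$ and commutes with the involution on complex values in the image).

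First, I would compute
\[
(G_2^* G_1)^*(G_2^* G_1) + (\widetilde G_2 G_1)^*(\widetilde G_2 G_1) = G_1^*(G_2 G_2^* + \widetilde G_2^* \widetilde G_2) G_1.
\]
Applying the double coprime identity \eqref{dcp} to the plant $P_2$ gives $G_2 G_2^* + \widetilde G_2^* \widetilde G_2 = I_{p+m}$, and then the normalization \eqref{norcp} for $P_1$, namely $G_1^* G_1 = I_m$, collapses the right side to $I_m$.

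Next, I would take entrywise Gelfand transforms. Since $\widehat{\,\cdot\,}$ is a $*$-algebra homomorphism from $S$ into $C(\mathfrak M, \mathbb C)$, the identity $A^*A + B^*B = I_m$ in $S^{m \times m}$ translates, for each $\varphi \in \mathfrak M$, to
\[
\mathbf{A}(\varphi)^* \mathbf{A}(\varphi) + \mathbf{B}(\varphi)^* \mathbf{B}(\varphi) = I_m
\]
in $\mathbb C^{m \times m}$. Lemma~\ref{lemma_0.12} then gives, pointwise on $\mathfrak M$,
\[
\bigl(\underline\sigma(\mathbf{G_2^* G_1})\bigr)^2 + \bigl(\overline\sigma(\mathbf{\widetilde G_2 G_1})\bigr)^2 = 1,
\]
which is the claim.

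There is no real obstacle here; the only subtle point is being careful that the involution on matrices over $S$ (defined via transposition and the involution $\cdot^*$ on $S$) is compatible with the conjugate transpose on $\mathbb C^{m \times m}$ after taking the Gelfand transform. Since $S$ is assumed to be a commutative semisimple Banach $*$-algebra, the Gelfand transform intertwines the two involutions (at the pointwise evaluations used here), so the passage from the $S$-level identity to the scalar-valued identity on $\mathfrak M$ is routine.
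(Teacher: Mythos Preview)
Your proof is correct and follows essentially the same route as the paper: both derive the identity $G_1^* G_2 G_2^* G_1 + G_1^* \widetilde{G}_2^* \widetilde{G}_2 G_1 = I$ from \eqref{norcp} and \eqref{dcp}, pass to Gelfand transforms, and invoke Lemma~\ref{lemma_0.12} pointwise. One minor remark: semisimplicity alone does not ensure that $\widehat{a^*} = \overline{\widehat a}$ on $\mathfrak{M}$ (one needs the involution to be symmetric/hermitian), but the paper makes the same implicit assumption, so your argument is no less rigorous than the original.
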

\begin{proof} Observing from \eqref{norcp} and \eqref{dcp}
that $G_1^* G_1=I$ and $G_2 G_2^*
  +\widetilde{G}_2^* \widetilde{G_2}=I$, we obtain
$$
\mathbf{G_1^*G_2 G_2^* G_1}+ \mathbf{G_1^* \widetilde{G}_2^*
  \widetilde{G}_2 G_1} =I
$$
pointwise on $\mathfrak{M}$. An application of Lemma \ref{lemma_0.12}
now yields the result.
\end{proof}

\section{$d_\nu$ is a metric}
\label{section_metric}

\noindent In this section, we will prove Theorem \ref{thm_d_nu_is_a_metric}.

\medskip

\noindent{\em Proof} (of Theorem~\ref{thm_d_nu_is_a_metric}).

\subsection{Positivity}

If $P_1, P_2 \in \mS(R, p, m)$, then clearly $d_{\nu}(P_1,P_2)\geq 0$.
Also, if $d_{\nu}(P_1, P_2)=0$, then $\|\widetilde{G}_{2}
G_{1}\|_{\infty} =0$, and so $\widetilde{G}_{2} G_{1}=0$. But
$$
\widetilde{G}_{2} G_{1}= \widetilde{D}_2 (P_2 -P_1) D_1.
$$
Thus $P_1=P_2$. Finally, for $P \in \mS(R, p, m)$, it is clear that
$d_\nu(P,P)=0$.

\subsection{Symmetry}

Let $P_1, P_2 \in \mS(R, p, m)$. Since $G_1^* G_2 = (G_2^* G_1)^*$, it
follows that $\det (G_1^* G_2) $ is invertible as an element of $S$
iff $\det (G_2^* G_1)$ is invertible as an element of $S$. Using (I2),
we see that $ \iota (\det (G_1^* G_2) )=\circ$ iff $\iota (\det (G_2^*
G_1) )=\circ$.  Hence $d_\nu(P_1, P_2)=d_\nu(P_2, P_1)$.

\subsection{The triangle inequality}

Suppose that $P_1, P_2, P_0 \in \mS(R, p, m)$. We want to show that $
d_\nu (P_1, P_2) \leq d_\nu (P_1, P_0) +d_\nu (P_0, P_2)$.  Since
$d_\nu$ is bounded above by $1$, this inequality is trivially
satisfied if either $d_\nu(P_1, P_0)=1$ or $d_\nu (P_0, P_2)=1$. So in
the rest of this subsection, we will assume that $d_\nu(P_1, P_0)<1$
and $d_\nu (P_0, P_2)<1$. This means that
\begin{enumerate}
\item $\det (G_{1}^* G_{0})$ is invertible in $S$ and $\iota (\det
  (G_{1}^* G_{0}))=\circ$.
\item $\det (G_{0}^* G_{2})$ is invertible in $S$ and $\iota (\det
  (G_{0}^* G_{2}))=\circ$.
\end{enumerate}
We will consider separately the following two possible cases:

\medskip

\noindent $\underline{1}^\circ$ $\det (G_1^* G_2) \in \inv S$ and
$\iota(\det (G_1^* G_2))=\circ$.  Then $d_\nu(P_1,P_2)=
\|\widetilde{G}_2 G_1\|_{\infty}$.

\medskip

\noindent $\underline{2}^\circ$ $\neg[\det (G_1^* G_2) \in \inv S
\textrm{ and }\iota(\det (G_1^* G_2))=\circ]$. Then $d_\nu(P_1,P_2)=
1$.

\medskip

First, using the fact \eqref{dcp} that $G_0 G_0^* +\widetilde{G}_0^*\widetilde{G}_0
=I$, we obtain that
\begin{equation}
\label{eq_11}
G_1^* G_2= G_1^* G_0 G_0^* G_2 + G_1^* \widetilde{G}_0^* \widetilde{G}_0 G_2.
\end{equation}

\medskip

\noindent $\underline{1}^\circ$ Suppose that $\det (G_1^* G_2) \in
\inv S$ and $\iota(\det (G_1^* G_2))=\circ$. In this case,
$d_\nu(P_1,P_2)= \|\widetilde{G}_2 G_1\|_{\infty}$.  Using (S3) from
Proposition~\ref{prop_sv}, with
\begin{eqnarray*}
P&:=& \mathbf{G_1^* G_0 G_0^* G_2},\\
Q&:=&  \mathbf{G_1^* \widetilde{G}_0^* \widetilde{G}_0 G_2}.
\end{eqnarray*}
and \eqref{eq_11}, we have $ \underline{\sigma}( \mathbf{G_1^* G_0
  G_0^* G_2 } ) - \underline{\sigma}( \mathbf{G_1^* G_2} ) \leq
\overline{\sigma}( \mathbf{G_1^* \widetilde{G}_0^* \widetilde{G}_0
  G_2} ) $ pointwise on $\mathfrak{M}$. Furthermore, using (S4) and
(S5), and rearranging, we obtain
\begin{equation}
\label{eq_11.5}
\underline{\sigma}( {\mathbf{G_1^* G_2}})
\geq
\underline{\sigma}( {\mathbf{G_1^* G_0}}) \cdot
\underline{\sigma}( {\mathbf{G_0^* G_2}})
-
\overline{\sigma}( {\mathbf{\widetilde{G}_0 G_1}} ) \cdot
\overline{\sigma}(  {\mathbf{\widetilde{G}_0 G_2}} )
\end{equation}
pointwise on $ \mathfrak{M}$. Since $d_\nu(P_1, P_0)$ and $d_\nu (P_0,
P_2)$ are both in $[0,1]$, we can find $\alpha, \beta$ (which are maps
from $\mathfrak{M}$ to $[0, \frac{\pi}{2}]$) such that
\begin{eqnarray*}
\sin \alpha&=&\overline{\sigma}(\mathbf{\widetilde{G}_0 G_1}), \\
\sin \beta &=& \overline{\sigma}(\mathbf{\widetilde{G}_0 G_2}),
\end{eqnarray*}
pointwise on $\mathfrak{M}$.  Then using Lemma \ref{lemma_0.13}, it
follows from \eqref{eq_11.5} that
\begin{equation}
\label{eq_11.75}
\underline{\sigma}(\mathbf{G_1^* G_2})
\geq
(\cos \alpha) \cdot( \cos \beta) - (\sin \alpha)\cdot ( \sin \beta)
=
\cos (\alpha +\beta)
\end{equation}
pointwise on $\mathfrak{M}$.  Similarly, define
$\gamma:\mathfrak{M}\rightarrow [0, \frac{\pi}{2}]$ by $
\overline{\sigma}(\mathbf{\widetilde{G}_1 G_2}) =\sin \gamma, $ then
$\underline{\sigma}(\mathbf{G_1^* G_2}) =\cos \gamma$ pointwise on
$\mathfrak{M}$.  The inequality \eqref{eq_11.75} now says that $ \cos
\gamma \geq \cos (\alpha +\beta) $ pointwise on $\mathfrak{M}$.  Hence
\begin{eqnarray*}
\sin \gamma
&\leq&
\sin (\alpha+\beta)=
(\sin \alpha)\cdot( \cos \beta) + (\sin \beta)\cdot ( \cos \alpha)
\\
& \leq & (\sin \alpha)\cdot 1 + (\sin \beta)\cdot 1,
\end{eqnarray*}
that is, $ \overline{\sigma}(\mathbf{\widetilde{G}_1 G_2}) \leq
\overline{\sigma}(\mathbf{\widetilde{G}_0 G_1}) +
\overline{\sigma}(\mathbf{\widetilde{G}_0 G_2}) \leq
d_{\nu}(P_1,P_0)+d_\nu(P_0,P_2) $ pointwise on $\mathfrak{M}$.
Consequently, $ d_{\nu}(P_1, P_2) = \|\widetilde{G}_1 G_2\|_{\infty}
\leq d_{\nu}(P_1,P_0)+d_\nu(P_0,P_2)$.

\bigskip

\noindent $\underline{2}^\circ$ $\neg[\det (G_1^* G_2) \in \inv S
\textrm{ and }\iota(\det (G_1^* G_2))=\circ]$. In this case
$d_\nu(P_1, P_2) =1$. Let
\begin{eqnarray*}
  A&:=& G_1^* G_0 G_0^* G_2 , \textrm{ and}\\
  B&:=& G_1^* \widetilde{G}_0^* \widetilde{G}_0 G_2 .
\end{eqnarray*}
Using the fact that $G_{1}^* G_{0}$ and $G_{0}^* G_{2}$ are invertible
in $S^{m\times m}$, it follows also that $A$ is invertible in
$S^{m\times m}$.

Suppose that $\|A^{-1} B\|_{\infty}<1$. Then it follows from
\eqref{eq_11} that
$$
G_1^*G_2 =A+B= A (I+A^{-1}B)
$$
and so $G_1^* G_2$ is also invertible in $S^{m\times m}$. Consider the
map $H:[0,1] \rightarrow \inv S$, given by $ H(t)= \det (A (I+t A^{-1}
B))$, $t \in [0,1]$.  By Proposition \ref{prop_hom_inv},
\begin{eqnarray*}
\circ
&=&
\circ+\circ=\iota(G_1^* G_0)+ \iota(G_0^* G_2)
=\iota (\det A)
=
\iota(H(0))\\
&=&
\iota(H(1))= \iota( \det (G_1^*G_2)).
\end{eqnarray*}
But then we have that $\det (G_1^* G_2) \in \inv S $ and $\iota(\det
(G_1^* G_2))=\circ$, which is a contradiction.

So our assumption that $\|A^{-1} B\|_{\infty}<1$ cannot be true. From
the compactness of $\mathfrak{M}$ and the definition of the norm on
$\mC^{m\times m}$, it follows that there is a $\varphi \in
\mathfrak{M}$ such that $\overline{\sigma} ((\mathbf{A^{-1}
  B})(\varphi)) \geq 1$.  But then we have that
$$
1 \leq \overline{\sigma}((\mathbf{A^{-1} B})(\varphi)) \leq
\overline{\sigma}((\mathbf{A}(\varphi))^{-1})
\cdot
\overline{\sigma}(\mathbf{B}(\varphi)),
$$
and so
 \begin{equation}  \label{*}
 \underline{\sigma}(\mathbf{A}(\varphi)) \leq
\overline{\sigma}(\mathbf{B}(\varphi)).
\end{equation}
Thus
\begin{eqnarray*}
&&
(1- ( \overline{\sigma}( (\mathbf{\widetilde{G}_0 G_1})(\varphi)) )^2 )
\cdot
(1- ( \overline{\sigma}( (\mathbf{\widetilde{G}_0 G_2})(\varphi)) )^2 )
\\
&=&
( \underline{\sigma}( (\mathbf{G_1^* G_0})(\varphi) )^2
\cdot
( \underline{\sigma}( (\mathbf{G_0^* G_2})(\varphi) )^2
 \text{ by  Lemma \ref{lemma_0.13}}
\\
& \leq & \underline{\sigma}((G_1^* G_0 G_0^* G_2)(\varphi))^2 \text{ by (S5) in
 Proposition \ref{prop_sv}}
 \\
 & \leq &
 \overline{\sigma}( (G_1^* \widetilde{G}_0^* \widetilde{G}_0 G_2(\varphi))^2 \text{ by \eqref{*}}
 \\
&\leq&
( \overline{\sigma}( (\mathbf{\widetilde{G}_0 G_1})(\varphi)) )^2
\cdot
(  \overline{\sigma}( (\mathbf{\widetilde{G}_0 G_2})(\varphi)) )^2
\text{ by (S4) in Proposition \ref{prop_sv}.}
\end{eqnarray*}
With
\begin{eqnarray*}
x&:=& \overline{\sigma}( (\mathbf{\widetilde{G}_0 G_1})(\varphi)), \textrm{ and}\\
y&:=& \overline{\sigma}( (\mathbf{\widetilde{G}_0 G_2})(\varphi)),
\end{eqnarray*}
the above says that $(1-x^2) \cdot (1-y^2) \leq x^2 y^2$, and so $
1\leq x^2 +y^2$. Thus
$$
1\leq (\overline{\sigma}( (\mathbf{\widetilde{G}_0 G_1})(\varphi)))^2
+ (\overline{\sigma}( (\mathbf{\widetilde{G}_0 G_2})(\varphi)))^2 \leq
(d_\nu (P_0,P_1))^2 + (d_{\nu}(P_0, P_2))^2 .
$$
Consequently,
\begin{eqnarray*}
(d_\nu (P_0,P_1)\!+ \!d_{\nu}(P_0, P_2))^2
\geq
(d_\nu (P_0,P_1))^2 \!+\! (d_{\nu}(P_0, P_2))^2
\geq 1 =\!(d_\nu(P_1,P_2))^2.
\end{eqnarray*}
Taking square roots, we obtain the desired conclusion.

This completes the proof of the triangle inequality, and also the
proof of Theorem \ref{thm_d_nu_is_a_metric}.  \hfill$\Box$

\section{Robust stability theorem}
\label{section_robustness}

\noindent In this section we prove Theorem~\ref{theorem_rst}.

 \begin{definition}
   Given $P \in (\mF(R))^{p\times m}$ and $C\in (\mF(R))^{m\times p}$,
   we define the {\em stability margin} of the pair $(P,C)$ by
$$
\mu_{P,C}=\left\{ \begin{array}{ll}
\|H(P,C)\|_{\infty}^{-1} &\textrm{if }P \textrm{ is stabilized by }C,\\
0 & \textrm{otherwise.}
\end{array}\right.
$$
\end{definition}

The number $\mu_{P,C}$ can be interpreted as a measure of the
performance of the closed loop system comprising $P$ and $C$: larger
values of $\mu_{P,C}$ correspond to better performance, with
$\mu_{P,C}>0$ if $C$ stabilizes $P$.

\begin{proposition}
\label{prop_r_s_1}
If $P$ is stabilized by $C$, then $ \mu_{P,C}=\displaystyle
\inf_{\varphi\in \mathfrak{M}}
\underline{\sigma}(\mathbf{\widetilde{K}}(\varphi)\mathbf{G}(\varphi)).
$
\end{proposition}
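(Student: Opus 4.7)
The plan is to derive the compact formula
\[ H(P,C) = G(\widetilde{K}G)^{-1}\widetilde{K}, \]
and then reduce the pointwise operator norm of $H(P,C)$ to a singular value of $\widetilde{K}G$ by exploiting the normalization identities $G^*G = I_m$ (from the normalized right factorization of $P$) and $\widetilde{K}\widetilde{K}^* = I_p$ (from the normalized left factorization of $C$).

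For the algebraic step, starting from $P = ND^{-1}$ and $C = \widetilde{D}_C^{-1}\widetilde{N}_C$, a direct multiplication gives
\[ \widetilde{D}_C(I-CP)D \;=\; \widetilde{D}_C D - \widetilde{N}_C N \;=\; \widetilde{K}G. \]
Since $C$ stabilizes $P$, $H(P,C) \in R^{(p+m)\times(p+m)}$, and this forces $\widetilde{K}G$ to be invertible in $R^{m\times m}$ (the standard coprime-factor characterization of stabilization). Therefore $(I-CP)^{-1} = D(\widetilde{K}G)^{-1}\widetilde{D}_C$. Substituting into the definition of $H(P,C)$ and using the identities $PD = N$ (so $[P;I]D = G$) and $\widetilde{D}_C C = \widetilde{N}_C$ (so $\widetilde{D}_C[-C,I] = \widetilde{K}$) produces the desired formula.

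For the norm step, fix $\varphi \in \mathfrak{M}$ and work pointwise with Gelfand transforms. Applying (S6) of Proposition~\ref{prop_sv} with $P = G$ (whose proof goes through verbatim for rectangular factors) and using $G^*G = I_m$ strips off $G$ on the left; a second application with $\widetilde{K}\widetilde{K}^* = I_p$ strips off $\widetilde{K}$ on the right, yielding
\[ \overline{\sigma}(\mathbf{H}(\varphi)) \;=\; \overline{\sigma}\bigl((\mathbf{\widetilde{K}G})(\varphi)^{-1}\bigr) \;=\; \bigl(\underline{\sigma}(\mathbf{\widetilde{K}}(\varphi)\mathbf{G}(\varphi))\bigr)^{-1}, \]
the last equality being (S2). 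Taking the supremum over $\varphi \in \mathfrak{M}$ gives $\|H(P,C)\|_\infty = \bigl(\inf_{\varphi} \underline{\sigma}(\mathbf{\widetilde{K}}(\varphi)\mathbf{G}(\varphi))\bigr)^{-1}$; inverting yields the claimed formula for $\mu_{P,C}$.

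The main obstacle is the algebraic identity $H(P,C) = G(\widetilde{K}G)^{-1}\widetilde{K}$: it requires careful bookkeeping between the left factorization of $C$ and the right factorization of $P$, plus a separate argument that stabilization implies invertibility of $\widetilde{K}G$ in $R^{m\times m}$, not merely in $S^{m\times m}$. Once that identity is in hand, the cancellations driven by $G^*G = I_m$ and $\widetilde{K}\widetilde{K}^* = I_p$ make the remaining singular-value computation routine.
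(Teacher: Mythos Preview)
Your proposal is correct and follows essentially the same route as the paper: derive $H(P,C)=G(\widetilde K G)^{-1}\widetilde K$ from the coprime factorizations, then use the normalization identities $G^*G=I_m$ and $\widetilde K\widetilde K^*=I_p$ together with (S6) and (S2) to reduce the pointwise norm to $(\underline{\sigma}(\mathbf{\widetilde K G}))^{-1}$. Your intermediate identity $\widetilde D_C(I-CP)D=\widetilde K G$ is a slightly more explicit bookkeeping of the same computation the paper carries out by direct substitution, and your remark about invertibility of $\widetilde K G$ in $R^{m\times m}$ is handled in the paper immediately afterward (using left/right invertibility of $G$ and $\widetilde K$ over $R$).
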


\begin{proof} We now write $P = NM^{-1} = \widetilde{M}^{-1}
  \widetilde{N}$ for a normalized left/right coprime factorization of
  $P$ and $C = N_c M_c^{-1} = \widetilde{M}_c^{-1} \widetilde{N}_c$
  for a normalized left/right coprime factorization of $C$ and we set
$$
  G = \left[\begin{array}{cc} N \\ M \end{array}\right], \quad
\widetilde{G} = \left[\begin{array}{cc} -\widetilde{M} & \widetilde{N} \end{array}\right], \quad
  K = \left[\begin{array}{cc} N_c \\ M_c \end{array}\right], \quad
  \widetilde{K} = \left[\begin{array}{cc} - \widetilde{N}_c & \widetilde{M}_c \end{array}\right].
  $$
Then we have
\begin{eqnarray*}
  H(P,C)&=&
  \left[\begin{array}{cc} P \\ I \end{array} \right]
  (I-CP)^{-1} \left[\begin{array}{cc} -C & I \end{array} \right]\\
  &=&
  \left[\begin{array}{cc} NM^{-1} \\ I \end{array} \right]
  (I-\widetilde{M}_c^{-1} \widetilde{N}_c NM^{-1})^{-1}
  \left[\begin{array}{cc} -\widetilde{M}_c^{-1} \widetilde{N}_c & I \end{array} \right]\\
  &=&
  \left[\begin{array}{cc} N \\ M \end{array} \right]
  (-\widetilde{N}_c N+\widetilde{M}_c M)^{-1}
  \left[\begin{array}{cc} -\widetilde{N}_c & \widetilde{M}_c \end{array} \right]\\
  &=& G(\widetilde{K} G)^{-1} \widetilde{K}.
\end{eqnarray*}
Also, $ G^*G=\left[\begin{array}{cc} N^* & M^* \end{array} \right]
\left[\begin{array}{cc} N \\ M \end{array} \right] = N^*N+M^*M=I$.
Similarly,
$$
\widetilde{K} \widetilde{K}^*= \left[\begin{array}{cc}
    -\widetilde{N}_c & \widetilde{M}_c \end{array} \right]
\left[\begin{array}{cc} -\widetilde{N}_c^* \\ \widetilde{M}_c^*
  \end{array} \right] =
\widetilde{N}_c\widetilde{N}_c^*+\widetilde{M}_c\widetilde{M}_c^*=I.
$$
Using (S6) of Proposition~\ref{prop_sv}, we obtain for each $\varphi
\in \mathfrak{M}$ that
$$
\overline{\sigma}(\mathbf{G}(\varphi)(\mathbf{\widetilde{K}}(\varphi)
\mathbf{G}(\varphi))^{-1}
\mathbf{\widetilde{K}}(\varphi))=\overline{\sigma}((\mathbf{\widetilde{K}}(\varphi)
\mathbf{G}(\varphi))^{-1} ) =
\frac{1}{\underline{\sigma}(\mathbf{\widetilde{K}}(\varphi)
  \mathbf{G}(\varphi))}.
$$
Thus
$$
\frac{1}{\mu_{P,C}}= \sup_{\varphi\in \mathfrak{M}}
\overline{\sigma}(\mathbf{G}(\varphi)(\mathbf{\widetilde{K}}(\varphi)
\mathbf{G}(\varphi))^{-1} \mathbf{\widetilde{K}}(\varphi)) =
\sup_{\varphi\in
  \mathfrak{M}}\frac{1}{\underline{\sigma}(\mathbf{\widetilde{K}}(\varphi)
  \mathbf{G}(\varphi))},
$$ and so $\mu_{P,C}=\displaystyle\inf_{\varphi\in \mathfrak{M}}
\underline{\sigma}(\mathbf{\widetilde{K}}(\varphi)\mathbf{G}(\varphi))$.
\end{proof}

\begin{remark}  \label{R:mu<1}
 It is useful to note that
\begin{equation}  \label{mu<1}
\mu_{P,C} < 1
\end{equation}
for any $P$ and $C$ as above.  One way to see this is to note that
$H(P,C)$ is idempotent $H(P,C) \cdot H(P,C) = H(P,C)$; this forces $\|
H(P,C)\|_\infty \ge 1$.  Another way to see \eqref{mu<1} is to use the
formula for $\mu_{P,C}$ in Proposition \ref{prop_r_s_1} as follows.
Since $G^*G = I$ and $\widetilde{K} \widetilde{K}^* = i$, it follows
that $\overline{\sigma}(G) =1$ and $\overline{\sigma}(\widetilde{K}) =
1$.  Then it follows from various of the properties singular values
listed in Proposition \ref{prop_sv} that
$$
\underline{\sigma}(\widetilde{\mathbf K}(\varphi) {\mathbf G}(\varphi)) \le
\overline{\sigma} (\widetilde{\mathbf K}(\varphi) {\mathbf G}(\varphi)) \le
\overline{\sigma}(\widetilde{\mathbf K}(\varphi)) \cdot
\overline{\sigma}({\mathbf G}(\varphi)) = 1.
$$
\end{remark}

\begin{proposition}
The following are equivalent:
\begin{enumerate}
\item $C$ stabilizes $P$.
\item $\det (\mathbf{\widetilde{K}} (\varphi) \mathbf{G}(\varphi))\neq
  0$ for all $\varphi \in \mathfrak{M}$ and $\iota(\det(\widetilde{K}
  G))=\circ$.
\end{enumerate}
\end{proposition}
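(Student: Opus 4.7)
The plan is to show that both conditions (1) and (2) are equivalent to the single statement that the $m \times m$ matrix $\widetilde{K}G = \widetilde{D}_c D - \widetilde{N}_c N$ is invertible as an element of $R^{m \times m}$.

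For the direction $(2)\Rightarrow(1)$, assumption (A4) immediately promotes the hypothesis $\det(\widetilde{K}G) \in R \cap \inv S$ together with $\iota(\det(\widetilde{K}G)) = \circ$ to invertibility of $\det(\widetilde{K}G)$ in $R$; Lemma~\ref{lemma_fth} then gives that $\widetilde{K}G$ itself is invertible in $R^{m\times m}$. Substituting into the identity $H(P,C) = G(\widetilde{K}G)^{-1}\widetilde{K}$ derived in the proof of Proposition~\ref{prop_r_s_1}, which is valid in $(\mF(R))^{(p+m)\times(p+m)}$, one sees that $H(P,C)$ has all entries in $R$, so $C$ stabilizes $P$.

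For the direction $(1)\Rightarrow(2)$, the main idea is to extract $(\widetilde{K}G)^{-1}$ explicitly as a matrix over $R$ by sandwiching $H(P,C)$ between $R$-valued left/right inverses coming from the Bezout identities implicit in the coprime factorizations. Right coprimeness of $P = ND^{-1}$ furnishes $X \in R^{m\times p}$, $Y \in R^{m\times m}$ with $XN + YD = I_m$, so that $\eta := \left[\begin{array}{cc} X & Y \end{array}\right]$ satisfies $\eta G = I_m$; left coprimeness of $C = \widetilde{D}_c^{-1}\widetilde{N}_c$ furnishes $\widetilde{X}_c \in R^{p\times m}$, $\widetilde{Y}_c \in R^{m\times m}$ with $\widetilde{N}_c \widetilde{X}_c + \widetilde{D}_c \widetilde{Y}_c = I_m$, so that $\xi := \left[\begin{array}{c} -\widetilde{X}_c \\ \widetilde{Y}_c \end{array}\right]$ satisfies $\widetilde{K}\xi = I_m$. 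Using the factored form of $H(P,C)$ then gives
$$
\eta\, H(P,C)\, \xi \;=\; (\eta G)(\widetilde{K}G)^{-1}(\widetilde{K}\xi) \;=\; (\widetilde{K}G)^{-1},
$$
and since the left side has entries in $R$, so does $(\widetilde{K}G)^{-1}$. Hence $\widetilde{K}G$ is a unit in $R^{m\times m}$, so $\det(\widetilde{K}G)$ is a unit in $R \cap \inv S$; semisimplicity of $S$ ensures that its Gelfand transform vanishes nowhere on $\mathfrak{M}$, while (A4) yields $\iota(\det(\widetilde{K}G)) = \circ$.

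The main obstacle is spotting the sandwich identity $\eta H(P,C)\xi = (\widetilde{K}G)^{-1}$; once that is in place, everything cascades from (A4), Lemma~\ref{lemma_fth}, and the semisimplicity of $S$. A minor wrinkle is the dimension bookkeeping for $\widetilde{K}$ and $G$ (which are non-square over $R$), and the implicit assumption that $C$ itself admits normalized coprime factorizations, as is needed to even define $K$ and $\widetilde{K}$.
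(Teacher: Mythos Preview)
Your proof is correct and follows essentially the same approach as the paper's. Both directions hinge on the identity $H(P,C) = G(\widetilde{K}G)^{-1}\widetilde{K}$; for $(1)\Rightarrow(2)$ the paper simply asserts that $G$ is left-invertible over $R$ and $\widetilde{K}$ is right-invertible over $R$ and concludes $(\widetilde{K}G)^{-1}\in R^{m\times m}$, whereas you make this explicit via the Bezout data $\eta,\xi$ and the sandwich $\eta H(P,C)\xi$. One small quibble: the nonvanishing of the Gelfand transform of an invertible element is basic Gelfand theory and does not require semisimplicity.
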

\begin{proof} Suppose that $C$ stabilizes $P$. Then from the
  calculation done above in the proof of Proposition~\ref{prop_r_s_1},
  we have
\begin{equation}
\label{eq_prop_r_s_2}
H(P,C)=G(\widetilde{K} G)^{-1} \widetilde{K}.
\end{equation}
But we know that $G$ is left invertible and $\widetilde{K}$ is right
invertible as matrices with entries from $R$. So from the above, we
see that $\widetilde{K} G\in R^{m\times m}$ is invertible as an
element of $R^{m\times m}$. In particular $\det (\widetilde{K} G)$ is
invertible as an element of $R$ and so $\det (\mathbf{\widetilde{K}}
(\varphi) \mathbf{G}(\varphi))\neq 0$ for all $\varphi \in
\mathfrak{M}$. Also from (A4), it follows that $
\iota(\det(\widetilde{K} G))=\circ$.

\medskip

Now suppose that $\det (\mathbf{\widetilde{K}} (\varphi)
\mathbf{G}(\varphi))\neq 0$ for all $\varphi \in \mathfrak{M}$ and
$\iota(\det(\widetilde{K} G))=\circ$. Then $\widetilde{K} G\in R \cap
\inv S$.  From (A4), we obtain that $\det (\widetilde{K} G)$ is
invertible as an element of $R$, and so we see from
\eqref{eq_prop_r_s_2} that $H(P,C)$ has entries from $R$. So $P$ is
stabilized by $C$.
\end{proof}

\begin{proposition}
$\mu_{P,C}=\mu_{C,P}$.
\end{proposition}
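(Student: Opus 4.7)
My plan is to reduce the equality $\mu_{P,C}=\mu_{C,P}$ to a pointwise identity of smallest singular values on $\mathfrak{M}$, by applying Proposition~\ref{prop_r_s_1} in both orientations. This requires three ingredients: the symmetry of stabilization, a simple sign-tracking for the swapped application of Proposition~\ref{prop_r_s_1}, and the pointwise identity itself.

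For the symmetry of stabilization, a direct block computation using $P(I-CP)^{-1}=(I-PC)^{-1}P$ and $C(I-PC)^{-1}=(I-CP)^{-1}C$ yields the identity
\[
H(P,C) \;+\; J\, H(C,P)\, J^{-1} \;=\; I_{p+m}, \qquad J := \begin{bmatrix} 0 & I_p \\ I_m & 0 \end{bmatrix}.
\]
Since $J$ and $J^{-1}$ are permutation matrices with entries in $\{0,1\}\subset R$, this shows $H(P,C)\in R^{(p+m)\times(p+m)}$ iff $H(C,P)\in R^{(p+m)\times(p+m)}$; hence both $\mu_{P,C}$ and $\mu_{C,P}$ are zero together, and we may assume $C$ stabilizes $P$. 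By Proposition~\ref{prop_r_s_1}, $\mu_{P,C}=\inf_{\varphi\in\mathfrak{M}}\underline{\sigma}((\widetilde{K}G)(\varphi))$. Applying the same proposition to the swapped pair $(C,P)$ (with $C$ in the role of plant and $P$ of controller), the corresponding product is $\widetilde{K}^{P}G^{C}=-\widetilde{G}K$ by a short calculation, and since multiplication by $-1$ does not alter singular values this gives $\mu_{C,P}=\inf_{\varphi\in\mathfrak{M}}\underline{\sigma}((\widetilde{G}K)(\varphi))$.

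The remaining task is to show that $\underline{\sigma}((\widetilde{K}G)(\varphi)) = \underline{\sigma}((\widetilde{G}K)(\varphi))$ pointwise on $\mathfrak{M}$. I would first record the analogue of~\eqref{dcp} for the factorization of $C$, namely $KK^{*}+\widetilde{K}^{*}\widetilde{K}=I_{p+m}$, which follows from the normalized double coprime factorization of $C$ by the same derivation as for $P$. Together with $G^{*}G=I_{m}$ this gives
\[
(\widetilde{K}G)^{*}(\widetilde{K}G) \;=\; G^{*}(I-KK^{*})G \;=\; I_{m}-(K^{*}G)^{*}(K^{*}G),
\]
so $\underline{\sigma}(\widetilde{K}G)^{2}=1-\overline{\sigma}(K^{*}G)^{2}$ pointwise on $\mathfrak{M}$. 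Exchanging the roles of $G$ and $K$ (and using $K^{*}K=I_{p}$ together with~\eqref{dcp}) similarly gives $\underline{\sigma}(\widetilde{G}K)^{2}=1-\overline{\sigma}(G^{*}K)^{2}$. Since $(K^{*}G)^{*}=G^{*}K$, the two largest singular values agree, the pointwise identity follows, and taking infima over $\varphi$ completes the proof. The only step not immediately visible from the excerpt is the auxiliary identity $KK^{*}+\widetilde{K}^{*}\widetilde{K}=I$; verifying this (by writing down the analogue of~\eqref{doublecoprime'} for $C$) is expected to be the main, though mild, obstacle.
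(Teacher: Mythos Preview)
Your argument is correct and follows essentially the same route as the paper: both proofs note the symmetry of stabilization, invoke Proposition~\ref{prop_r_s_1} for each orientation, and then establish the pointwise identity $\underline{\sigma}(\mathbf{\widetilde{K}G})=\underline{\sigma}(\mathbf{\widetilde{G}K})$ via Lemma~\ref{lemma_0.12} applied to the normalized double-coprime relations. The only cosmetic difference is the choice of intermediate quantity---you bridge through $\overline{\sigma}(K^{*}G)=\overline{\sigma}(G^{*}K)$ using $KK^{*}+\widetilde{K}^{*}\widetilde{K}=I$ and \eqref{dcp}, whereas the paper bridges through $\overline{\sigma}(\widetilde{K}\widetilde{G}^{*})=\overline{\sigma}(\widetilde{G}\widetilde{K}^{*})$ using the same two identities sandwiched the other way.
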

\begin{proof} It is not hard to see that $C$ stabilizes $P$ iff $P$
  stabilizes $C$.  We have
\begin{eqnarray*}
\mathbf{\widetilde{K}}\mathbf{\widetilde{G}^*}\mathbf{\widetilde{G}}\mathbf{\widetilde{K}^*}
+
\mathbf{\widetilde{K}}\mathbf{G}\mathbf{G^*}\mathbf{\widetilde{K}^*}
&=& I,\\
\mathbf{\widetilde{G}}\mathbf{\widetilde{K}^*}\mathbf{\widetilde{K}}\mathbf{\widetilde{G}^*}
+
\mathbf{\widetilde{G}}\mathbf{K}\mathbf{K^*}\mathbf{\widetilde{G}^*}
&=& I
\end{eqnarray*}
pointwise on $\mathfrak{M}$. So it follows from Lemma~\ref{lemma_0.12}
that
\begin{equation}
\label{eq_prop_4.1_16}
(\underline{\sigma}(\mathbf{\widetilde{K}}\mathbf{G}))^2
=
1-(\overline{\sigma}(\mathbf{\widetilde{K}}\mathbf{\widetilde{G}^*}))^2
=
(\underline{\sigma}(\mathbf{\widetilde{G}}\mathbf{K}))^2.
\end{equation}
This completes the proof.
\end{proof}

\begin{theorem}
\label{theorem_rst}
If $P_0, P_1 \in \mS(P,p,m)$ and $C\in \mS(R,m,p)$, then
$$
\sin^{-1} \mu_{P_1,C} \geq \sin^{-1} \mu_{P_0,C}-\sin^{-1} (d_\nu(P_0,P_1)).
$$
\end{theorem}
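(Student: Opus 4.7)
The plan is to first dispose of the trivial regime, then establish that $C$ stabilizes $P_1$, and finally convert a pointwise singular-value estimate into the desired $\sin^{-1}$ inequality. Since $\mu_{P,C} \in [0,1)$ by Remark~\ref{R:mu<1} and $d_\nu \in [0,1]$, all three $\sin^{-1}$ terms lie in $[0,\pi/2]$; if $\mu_{P_0,C} \leq d_\nu(P_0,P_1)$, the right-hand side is non-positive while the left-hand side is non-negative, and the inequality is vacuous. So I would assume $\mu_{P_0,C} > d_\nu(P_0,P_1)$. In particular $C$ stabilizes $P_0$, so Proposition~\ref{prop_r_s_1} applies, and $d_\nu(P_0,P_1) < 1$, so $d_\nu(P_0,P_1) = \|\widetilde{G}_0 G_1\|_\infty$ with $\iota(\det(G_0^* G_1)) = \circ$.

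Next I would show $C$ stabilizes $P_1$. Following the pattern of the triangle-inequality proof, use \eqref{dcp} to decompose $\widetilde{K} G_1 = A + B$ with $A := \widetilde{K} G_0 G_0^* G_1$ and $B := \widetilde{K}\widetilde{G}_0^*\widetilde{G}_0 G_1$. Introduce angle functions $\theta_0, \beta \colon \mathfrak{M} \to [0,\pi/2]$ by
\[
\cos\theta_0 := \underline{\sigma}(\widetilde{K} G_0), \quad \sin\theta_0 := \overline{\sigma}(\widetilde{K}\widetilde{G}_0^*), \quad \cos\beta := \underline{\sigma}(G_0^* G_1), \quad \sin\beta := \overline{\sigma}(\widetilde{G}_0 G_1),
\]
the first substitution being justified by \eqref{eq_prop_4.1_16} and the second by Lemma~\ref{lemma_0.13}. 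Combining (S3)--(S5) of Proposition~\ref{prop_sv} pointwise on $\mathfrak{M}$ yields, for every $t \in [0,1]$,
\[
\underline{\sigma}(A + tB) \geq \underline{\sigma}(A) - t\,\overline{\sigma}(B) \geq \cos\theta_0 \cos\beta - \sin\theta_0 \sin\beta = \cos(\theta_0+\beta).
\]
The hypothesis $\cos\theta_0 \geq \mu_{P_0,C} > d_\nu(P_0,P_1) \geq \sin\beta$ forces $\theta_0 + \beta < \pi/2$, so this bound is strictly positive. Hence $A + tB \in \inv S^{m\times m}$ for every $t \in [0,1]$, and the path $t \mapsto \det(A+tB)$ lies in $\inv S$. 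By Proposition~\ref{prop_hom_inv},
\[
\iota(\det(\widetilde{K} G_1)) = \iota(\det A) = \iota(\det(\widetilde{K} G_0)) + \iota(\det(G_0^* G_1)) = \circ + \circ = \circ,
\]
and the stabilization criterion proved above then certifies that $C$ stabilizes $P_1$.

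With stability established, Proposition~\ref{prop_r_s_1} gives $\mu_{P_1,C} = \inf_\varphi \underline{\sigma}(\widetilde{K} G_1)(\varphi) \geq \inf_\varphi \cos(\theta_0(\varphi)+\beta(\varphi))$. Using $\cos\theta_0 \geq \mu_{P_0,C}$ and $\sin\beta \leq d_\nu(P_0,P_1)$ pointwise, together with $\cos^{-1} = \pi/2 - \sin^{-1}$, I bound $\theta_0 + \beta$ by $\tfrac{\pi}{2} - (\sin^{-1}\mu_{P_0,C} - \sin^{-1} d_\nu(P_0,P_1))$, an expression that lies in $[0,\pi/2]$ thanks to the reduction. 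Monotonicity of $\cos$ on $[0,\pi/2]$ and the identity $\cos(\pi/2 - x) = \sin x$ then give $\mu_{P_1,C} \geq \sin\bigl(\sin^{-1}\mu_{P_0,C} - \sin^{-1} d_\nu(P_0,P_1)\bigr)$, and the monotonic $\sin^{-1}$ delivers the claim.

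The main obstacle I anticipate is the stability step: one must set up the two trigonometric substitutions in parallel and verify the strict positivity $\underline{\sigma}(A+tB) \geq \cos(\theta_0+\beta) > 0$ uniformly along the homotopy, so that Proposition~\ref{prop_hom_inv} can be invoked to compute the index of $\det(\widetilde{K}G_1)$. The final conversion to $\sin^{-1}$ form amounts to routine monotonicity and angle-complement bookkeeping on $[0,\pi/2]$.
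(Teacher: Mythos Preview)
Your proposal is correct and follows essentially the same approach as the paper: dispose of the trivial regime, decompose $\widetilde{K}G_1 = A+B$ via \eqref{dcp}, use a homotopy through $A+tB$ together with Proposition~\ref{prop_hom_inv} to compute $\iota(\det(\widetilde{K}G_1))=\circ$, and finish with the pointwise singular-value bound $\underline{\sigma}(\widetilde{K}G_1)\geq \cos\theta_0\cos\beta-\sin\theta_0\sin\beta$. The only cosmetic difference is that the paper first derives $\overline{\sigma}(B)<\underline{\sigma}(A)$ via the $a^2/(1-a^2)$ transformation (and hence $\|A^{-1}B\|_\infty<1$), whereas you introduce the angle functions $\theta_0,\beta$ from the outset and bound $\underline{\sigma}(A+tB)\geq\cos(\theta_0+\beta)>0$ directly; both routes are equivalent and yield the same homotopy.
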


\begin{proof} If $d_{\nu}(P_0,P_1)\geq \mu_{P_0,C}$, then
  $\sin^{-1}(d_\nu(P_0,P_1))\geq \sin^{-1} \mu_{P_0,C}$ and so
  $\sin^{-1} \mu_{P_0,C} -\sin^{-1} (d_\nu(P_0,P_1))\leq 0$. The
  claimed inequality in the statement of the theorem now follows
  trivially since $\mu_{P_1,C}\geq 0$.

  We therefore assume in the rest of the proof that $d_\nu (P_0,P_1)
  <\mu_{P_0,C}$. As noted in Remark\ref{R:mu<1}, $\mu_{P_0,C} \leq 1$:  hence
  we must have
  $d_\nu(P_0,P_1)<1$. Also $\mu_{P_0,C}=0$ implies that $d_\nu
  (P_0,P_1)<0$, a contradiction to the fact that $d_\nu$ is a metric.
  Hence $\mu_{P_0,C}>0$, that is, $C$ stabilizes $P_0$. Now
$$
d_{\nu}(P_0,P_1) = \sup_{\varphi\in \mathfrak M}
\overline{\sigma}((\mathbf{\widetilde{G}_0 G_1})(\varphi)) < \inf_{
  \varphi\in \mathfrak M} \underline{\sigma}((\mathbf{\widetilde{K}
  G_0})(\varphi)) = \mu_{P_0,C},
$$
and so pointwise on $\mathfrak{M}$, there holds that $
\overline{\sigma}(\mathbf{\widetilde{G}_0 G_1}) <
\underline{\sigma}(\mathbf{\widetilde{K} G_0}).  $ But for numbers
$a,b\in (0,1)$,
$$
a<b\textrm{ iff }\displaystyle \frac{a^2}{1-a^2}<\frac{b^2}{1-b^2},
$$
and so  we have
$$
\frac{(\overline{\sigma}(\mathbf{\widetilde{G}_0 G_1}))^2}{1-
  (\overline{\sigma}(\mathbf{\widetilde{G}_0 G_1}))^2} <
\frac{(\underline{\sigma}(\mathbf{\widetilde{K} G_0}))^2}{1-
  (\underline{\sigma}(\mathbf{\widetilde{K} G_0}))^2}.
$$
Using Lemma~\ref{lemma_0.13} and \eqref{eq_prop_4.1_16}, we obtain
$\displaystyle \frac{\overline{\sigma}(\mathbf{\widetilde{G}_0
    G_1})}{\underline{\sigma} (\mathbf{G_0^* G_1})} <
\frac{\underline{\sigma}(\mathbf{\widetilde{K} G_0})}{
  \overline{\sigma}(\mathbf{\widetilde{K}\widetilde{G}_0^*})}$.  Thus
\begin{equation}
\label{eq_th_4.2_0}
\overline{\sigma}(\mathbf{\widetilde{K} \widetilde{G}_0^* \widetilde{G}_0 G_1} )
<
\underline{\sigma}(\mathbf{\widetilde{K} G_0 G_0^* G_1 }).
\end{equation}
But
\begin{equation}
\label{eq_th_4.2_1}
\widetilde{K}G_1=\widetilde{K}G_0G_0^*G_1+\widetilde{K}\widetilde{G}_0^* \widetilde{G}_0 G_1.
\end{equation}
Let $A:= \widetilde{K}G_0G_0^*G_1$, and
$B:=\widetilde{K}\widetilde{G}_0^* \widetilde{G}_0 G_1$.  Using the
fact that $\widetilde{K}G_0$ and $G_0^*G_1$ are invertible in
$S^{m\times m}$, it follows also that $A$ is invertible in $S^{m\times
  m}$. Also, from \eqref{eq_th_4.2_0}, it follows that $\|A^{-1}
B\|_{\infty}<1$.  Then it follows from \eqref{eq_th_4.2_1} that $
\widetilde{K}G_1 =A+B= A (I+A^{-1}B) $ and so $\widetilde{K}G_1$ is
also invertible in $S^{m\times m}$. Consider the map $H:[0,1]
\rightarrow \inv S$, defined by $ H(t)= \det (A (I+t A^{-1} B))$, $t
\in [0,1]$.  By Proposition \ref{prop_hom_inv}, it follows that
$H(0)=H(1)$, that is,
$$
\iota( \det (\widetilde{K}G_1) ) = \iota(
\det(\widetilde{K}G_0G_0^*G_1) ) = \iota( \det(\widetilde{K}G_0) )
+\iota( \det(G_0^*G_1) ) = \circ+\circ=\circ.
$$
But $\det (\widetilde{K}G_1)\in R$. By (A4) it follows that $\det
(\widetilde{K}G_1)$ is invertible as an element of $R$. Consequently
$C$ stabilizes $P_1$ and
$$
\mu_{P_1,C}=\inf_{\varphi\in \mathfrak{M}}
\underline{\sigma}((\mathbf{\widetilde{K} G_1})(\varphi)).
$$
From \eqref{eq_th_4.2_1}, we have
\begin{eqnarray*}
  \underline{\sigma} ( \mathbf{\widetilde{K} G_1} )
  &=&
  \underline{\sigma} ( \mathbf{\widetilde{K}G_0G_0^*G_1}
+\mathbf{\widetilde{K}\widetilde{G}_0^* \widetilde{G}_0 G_1} )
  \\
  &\geq &
  \underline{\sigma} ( \mathbf{\widetilde{K}G_0G_0^*G_1} )-
  \overline{\sigma}(\mathbf{\widetilde{K}\widetilde{G}_0^* \widetilde{G}_0 G_1} )
  \\
  &\geq & \underline{\sigma} ( \mathbf{\widetilde{K}G_0}) \underline{\sigma} (\mathbf{G_0^*G_1} )-
  \overline{\sigma}(\mathbf{\widetilde{K}\widetilde{G}_0^*})
  \overline{\sigma}(\mathbf{\widetilde{G}_0 G_1} )
  \\
  &=&
  \sin ( \sin^{-1}\underline{\sigma}(\mathbf{\widetilde{K} G_0}) -
  \sin^{-1}\overline{\sigma}(\mathbf{\widetilde{G}_0 G_1} ))  .
\end{eqnarray*}
Since $\sin^{-1}:[-1,1]\rightarrow [-\frac{\pi}{2}, \frac{\pi}{2}]$ is
an increasing function, it now follows that
$$
\sin^{-1} \underline{\sigma} ( \mathbf{\widetilde{K} G_1} )
\geq
\sin^{-1}\underline{\sigma}(\mathbf{\widetilde{K} G_0}) -
\sin^{-1}\overline{\sigma}(\mathbf{\widetilde{G}_0 G_1} ).
$$
Consequently, $\sin^{-1} \mu_{P_1,C} \geq \sin^{-1}
\mu_{P_0,C}-\sin^{-1} (d_\nu(P_0,P_1))$.
\end{proof}

\begin{corollary}
If $P_0,P\in \mS(R,p,m)$, then
$$
\mu_{P,C} \geq \mu_{P_0,C}-d_{\nu}(P_0,P).
$$
\end{corollary}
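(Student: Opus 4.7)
The plan is to deduce the corollary from Theorem~\ref{theorem_rst} by applying $\sin$ to both sides of the inequality
$$
\sin^{-1}\mu_{P,C} \geq \sin^{-1}\mu_{P_0,C} - \sin^{-1}(d_\nu(P_0,P))
$$
and then comparing with $\mu_{P_0,C}-d_\nu(P_0,P)$. First I would dispose of the trivial case: if $d_\nu(P_0,P) \geq \mu_{P_0,C}$, then $\mu_{P_0,C}-d_\nu(P_0,P)\leq 0 \leq \mu_{P,C}$ and there is nothing to prove.

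In the remaining case, $d_\nu(P_0,P) < \mu_{P_0,C}$, and Remark~\ref{R:mu<1} gives $\mu_{P_0,C}\leq 1$, so both $d_\nu(P_0,P)$ and $\mu_{P_0,C}$ lie in $[0,1]$. Set
$$
\alpha := \sin^{-1}\mu_{P_0,C} \in (0,\tfrac{\pi}{2}], \qquad \beta := \sin^{-1}(d_\nu(P_0,P)) \in [0,\tfrac{\pi}{2}),
$$
so that $\alpha>\beta\geq 0$. Theorem~\ref{theorem_rst} then reads $\sin^{-1}\mu_{P,C}\geq \alpha-\beta$, and since $\sin^{-1}\mu_{P,C}\in[0,\tfrac{\pi}{2}]$ and $\alpha-\beta\in(0,\tfrac{\pi}{2}]$, applying the monotone map $\sin$ yields $\mu_{P,C} \geq \sin(\alpha-\beta)$.

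The main (and really the only nontrivial) step is then to verify the elementary trigonometric inequality
$$
\sin(\alpha-\beta) \geq \sin\alpha - \sin\beta \qquad (0\leq\beta\leq\alpha\leq \tfrac{\pi}{2}).
$$
I would prove this via the sum-to-product identities
$$
\sin\alpha - \sin\beta = 2\cos\!\left(\tfrac{\alpha+\beta}{2}\right)\sin\!\left(\tfrac{\alpha-\beta}{2}\right), \qquad \sin(\alpha-\beta) = 2\sin\!\left(\tfrac{\alpha-\beta}{2}\right)\cos\!\left(\tfrac{\alpha-\beta}{2}\right),
$$
together with the fact that $\sin(\tfrac{\alpha-\beta}{2})\geq 0$ and, since $\tfrac{\alpha+\beta}{2}\geq \tfrac{\alpha-\beta}{2}$ with both arguments in $[0,\tfrac{\pi}{2}]$, $\cos(\tfrac{\alpha-\beta}{2})\geq \cos(\tfrac{\alpha+\beta}{2})$. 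Substituting $\sin\alpha = \mu_{P_0,C}$ and $\sin\beta = d_\nu(P_0,P)$ into the resulting bound gives $\mu_{P,C}\geq \mu_{P_0,C}-d_\nu(P_0,P)$, as required.
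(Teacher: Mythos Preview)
Your proof is correct and follows essentially the same approach as the paper: both deduce the corollary from Theorem~\ref{theorem_rst} by showing that $\sin^{-1}x \le \sin^{-1}y + \sin^{-1}z$ forces $x\le y+z$ for $x,y,z\in[0,1]$. The only difference is in how this trigonometric fact is verified: the paper takes cosines of both sides and manipulates algebraically, whereas you take sines and use the sum-to-product identity, which is arguably a bit cleaner.
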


\begin{proof} For $x,y,z\in [0,1]$, if
 $
\sin^{-1} x\leq \sin^{-1} y+\sin^{-1} z.
 $
By taking the cosine of both sides and using that the $\cos$ is a decreasing function on
 $[0, \frac{\pi}{2}]$, we then get
 $
\sqrt{1-x^2} \geq \sqrt{1-y^2} \sqrt{1-z^2}-yz$,
which in turn implies that
 $$
 (\sqrt{1-x^2}+yz)^2\geq (1-y^2)(1-z^2).
$$
Hence
$
x^2 \leq y^2 +z^2+2yz\sqrt{1-x^2}\leq y^2 +z^2+2yz\cdot 1=(y+z)^2$,
which gives finally that $ x\leq y+z$.  The claimed inequality now
follows immediately from the inequality in Theorem~\ref{theorem_rst}
upon setting $x = \mu_{P_0,C}$, $y = d_\nu(P_0,P)$ and $z =
\mu_{P,C}$.
\end{proof}

The above result says that if the controller $C$ performs sufficiently
well with the nominal plant $P_0$, and the distance $d_{\nu}(P_0,P)$
between the plant $P$ and $P_0$ is sufficiently small, then $C$ is
guaranteed to achieve a certain level of performance with the plant
$P$. So if $P$ and $P_0$ represent alternate models of the system (one
which is ``true'' and one which is our nominal model) and if
$d_\nu(P_0,P)$ is small, then the difference between $P$ and $P_0$ can
be ignored for the purposes of designing a stabilizing controller.

Another way of stating the result in Theorem~\ref{theorem_rst} is that
if $C$ stabilizes $P_0$ with a stability margin $\mu_{P,C}>m$, and $P$
is another plant which is close to $P_0$ in the sense that
$d_\nu(P,P_0)\leq m$, then $C$ is also guaranteed to stabilize $P$.
Furthermore, if $C$ satisfies the stronger condition $\mu_{P,C}>M>m$
for a number $M$, then $C$ is also guaranteed to stabilize $P$ {\em
  with} a stability margin $\mu_{P,C}$ which satisfies $\mu_{P,C}\geq
\sin^{-1}M-\sin^{-1}m$.

\section{Applications}
\label{section_applications}

\noindent Now we specialize $R$ to several classes of stable transfer functions
and obtain various extensions of the $\nu$-metric. Some of the verifications
of the properties (A1)-(A4) are similar to the section on examples from \cite{Sas}.

\subsection{The disk algebra}
\label{subsection_disk_algebra}
Let
$$
\mD:= \{ z\in \mC: |z| <1\},\quad
\overline{\mD}:= \{ z\in \mC: |z| \leq 1\}, \quad
\mT:= \{ z\in \mC: |z|=1\}.
$$
The {\em disk algebra} $A(\mD)$ is the set of all functions $f:
\overline{\mD} \rightarrow \mC$ such that $f$ is holomorphic in $\mD$
and continuous on $\overline{\mD}$.  Let $C(\mT)$ denote the set of
complex-valued continuous functions on the unit circle $\mT$. For each
$f\in \inv C(\mT)$, we can define the {\em winding number} ${\tt
  w}(f)\in \mZ$ of $f$ as follows:
$$
{\tt w}(f)= \frac{1}{2\pi}(\Theta(2\pi)- \Theta(0)),
$$
where $\Theta:[0,2\pi] \rightarrow \mR$ is a continuous function such
that
$$
f(e^{it}) =|f(e^{it})| e^{i \Theta(t)}, \quad t \in [0,2\pi].
$$
The existence of such a $\Theta$ can be proved; see \cite[Lemma
4.6]{Ull}. Also, it can be checked that ${\tt w}$ is well-defined and
integer-valued. Geometrically, ${\tt w}(f)$ is the number of times the
curve $t \mapsto f(e^{it}): [0,2\pi] \rightarrow \mC$ winds around the
origin in a counterclockwise direction.

Recall the definition of a full subring.

\begin{definition}
  Let $R_1, R_2$ be commutative unital rings, and let $R_1$ be a
  subring of $R_2$. Then $R_1$ is said to be a {\em full} subring of $R_2$
  if for every $x\in R_1$ such that $x$ is invertible in $R_2$, it
  holds that $x$ is invertible in $R_1$.
\end{definition}

\begin{lemma}
\label{lemma_disk_algebra}
  Let
\begin{eqnarray*}
  R&=& \textrm{\em a unital full subring of }A(\mD),\\
  S&:=& C(\mT), \\
  G&:=& \mZ, \\
  \iota&:=&{\tt w}.
\end{eqnarray*}
Then {\em (A1)-(A4)} are satisfied.
\end{lemma}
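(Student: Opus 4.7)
The plan is to verify the four axioms in the order (A1), (A2), (A3), (A4); the only nontrivial content lies in (A4), which will follow from the argument principle together with the fullness hypothesis on $R$.

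For (A1), $A(\mD)$ is an integral domain by the identity theorem for holomorphic functions, so its unital subring $R$ inherits commutativity, the identity, and the integral-domain property. For (A2), $S=C(\mT)$ with the supremum norm is a unital commutative Banach algebra, the involution is pointwise complex conjugation, and semisimplicity is immediate since the Gelfand transform of $C(\mT)$ is (up to the canonical identification $\mathfrak M\cong\mT$) just the identity map. The restriction map $A(\mD)\to C(\mT)$, $f\mapsto f|_{\mT}$, is injective by the maximum modulus principle, so $R\subset A(\mD)\hookrightarrow C(\mT)=S$.

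For (A3), I would take $(G,+)=(\mZ,+)$ with identity $0=\circ$ and $\iota={\tt w}$. Property (I1) is the standard multiplicativity of the winding number, which is read off directly from the defining formula: if $f(e^{it})=|f(e^{it})|e^{i\Theta_f(t)}$ and $g(e^{it})=|g(e^{it})|e^{i\Theta_g(t)}$, then $\Theta_f+\Theta_g$ is an admissible argument for $fg$. Property (I2) follows by the same token, since an admissible argument for $\overline f$ is $-\Theta_f$. For (I3) I would show that for any $f\in\inv C(\mT)$ and any $g\in C(\mT)$ with $\|g\|_\infty<\min_{\mT}|f|$, the straight-line homotopy $f+sg$, $s\in[0,1]$, stays in $\inv C(\mT)$, and the function $s\mapsto{\tt w}(f+sg)$ is integer valued and continuous, hence constant; this is the content of Proposition~\ref{prop_hom_inv} specialized to winding numbers, and gives constancy of ${\tt w}$ on the open ball of radius $\min_{\mT}|f|$ around $f$.

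The main obstacle is (A4), and I would handle it via the argument principle. Let $x\in R\cap\inv C(\mT)$, so $x\in A(\mD)$ is zero-free on $\mT$. If $x$ is invertible in $R$ with inverse $y\in R\subset A(\mD)$, then $y|_{\mT}=1/x|_{\mT}$, so $x$ has no zeros in $\overline{\mD}$; the argument principle applied to $x\in A(\mD)$ then gives
\[
{\tt w}(x)\;=\;\frac{1}{2\pi i}\int_{\mT}\frac{x'(z)}{x(z)}\,dz\;=\;0.
\]
Conversely, if ${\tt w}(x)=0$, the argument principle forces $x$ to have no zeros in $\mD$, and since $x$ is also zero-free on $\mT$ by hypothesis, $x$ is zero-free on $\overline{\mD}$. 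Hence $1/x$ is continuous on $\overline{\mD}$ and holomorphic on $\mD$, i.e.\ $1/x\in A(\mD)$, so $x$ is invertible in $A(\mD)$. Because $R$ is a \emph{full} subring of $A(\mD)$, this invertibility descends to $R$, completing the verification of (A4).
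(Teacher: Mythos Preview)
Your approach is correct and matches the paper's: dispatch (A1)--(A3) routinely and obtain (A4) from the argument principle together with fullness of $R$ in $A(\mD)$.

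One technical wrinkle worth fixing: the contour-integral form of the argument principle you write down,
\[
{\tt w}(x)=\frac{1}{2\pi i}\int_{\mT}\frac{x'(z)}{x(z)}\,dz,
\]
presupposes that $x'$ exists on $\mT$, which is not guaranteed for a general element of $A(\mD)$ (such functions are only required to be continuous on $\overline{\mD}$, not differentiable there). The conclusion you want---that ${\tt w}(x)$ equals the number of zeros of $x$ in $\mD$ whenever $x\in A(\mD)$ is zero-free on $\mT$---is still true, but the paper justifies it more carefully by passing to the dilations $f_r(z):=f(rz)$, which do extend holomorphically across $\mT$: one applies the classical argument principle to $f_r$, notes that ${\tt w}(f_r)\to{\tt w}(f)$ by local constancy since $\|f_r-f\|_\infty\to 0$ as $r\nearrow 1$, and reads off the zero count for $f$ in the limit. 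Either insert that approximation step, or cite the boundary version of the argument principle that does not rely on differentiability on $\mT$.
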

\begin{proof} (A1) is clear. The involution $\cdot^{*}$ in (A2) is
  defined by $f^*(z)=\overline{f(z)}$ ($z\in \mT$) for $f\in C(\mT)$.
  (A3)(I1) and (A3)(I2) are evident from the definition of {\tt w}.
  Also, the map ${\tt w}: \inv C(\mT) \rightarrow \mZ$ is locally
  constant (that is, it is continuous when $\mZ$ is equipped with the
  discrete topology and $C(\mT)$ is equipped with the usual
  $\sup$-norm); see \cite[Lemma~4.6.(ii)]{Ull}.  So (A3)(I3) holds as
  well.  Finally, we will show below that (A4) holds.

Suppose that $f \in R \cap (\inv C(\mT))$ is invertible as an element
of $R$. Then obviously $f$ is also invertible as an element of
$A(\mD)$.  Hence it has no zeros or poles in $\overline{\mD}$.  For
$r\in (0,1)$, define $f_r \in A(\mD)$ by $f_r(z)=f(rz)$ ($z\in
\overline{\mD}$).  Then $f_r$ also has no zeros or poles in
$\overline{\mD}$, and has a holomorphic extension across $\mT$. From
the Argument Principle (applied to $f_r$), it follows that ${\tt
  w}(f_r)=0$. But $\|f_r-f\|_\infty \rightarrow 0$ as $r\nearrow 1$.
Hence ${\tt w}(f)= \displaystyle \lim_{r\rightarrow 1} {\tt w}(f_r)=
\displaystyle \lim_{r\rightarrow 1} 0=0$.

Suppose, conversely, that $f\in R \cap (\inv C(\mT))$ is such that
${\tt w}(f)=0$. For all $r\in (0,1)$ sufficiently close to $1$, we
have that $f_r\in \inv C(\mT)$. Also, by the local constancy of ${\tt
  w}$, for $r$ sufficiently close to $1$, ${\tt w}(f_r)= {\tt
  w}(f)=0$. By the Argument principle, it then follows that $f_r$ has
no zeros in $\overline{\mD}$. Equivalently, $f$ has no zeros in $r
\overline{\mD}$. But letting $r\nearrow 1$, we see that $f$ has no
zeros in $\mD$. Moreover, $f$ has no zeros on $\mT$ either, since
$f\in \inv C(\mT)$. Thus $f$ has no zeros in $\overline{\mD}$.
Consequently, we conclude that $f$ is invertible as an element of
$A(\mD)$. (Indeed, $f$ is invertible as an element of
$C(\overline{\mD})$, and it is also then clear that this inverse is
holomorphic in $\mD$.) Finally, since $R$ is a full subring of
$A(\mD)$, we can conclude that $f$ is invertible also as an element of
$R$.
\end{proof}

\noindent  Besides $A(\mD)$ itself, some other examples of such $R$ are:
\begin{enumerate}
\item $RH^\infty(\mD)$, the set of all rational functions without
  poles in $\overline{\mD}$.
\item The Wiener algebra $W^{+}(\mD)$ of all functions $f\in A(\mD)$
  that have an absolutely convergent Taylor series about the origin:

$\displaystyle \sum_{n=0}^\infty |f_n|<+\infty$, where $f(z)=
\displaystyle \sum_{n=0}^\infty f_n z^n$ ($z\in \mD$).
\item $\partial^{-n}H^\infty(\mD)$, the set of $f:\mD \rightarrow \mC$ such
  that $f,f^{(1)},f^{(2)},\dots,f^{(n)}$ belong to $H^\infty(\mD)$. Here
  $H^\infty(\mD)$ denotes the Hardy algebra of all bounded and holomorphic
  functions on $\mD$.
\end{enumerate}

\noindent In the definition of the $\nu$-metric given in
Definition~\ref{def_nu_metric} corresponding to
Lemma~\ref{lemma_disk_algebra}, the $\|\cdot\|_\infty$ now means the
following: if $F\in (C(\mT))^{p\times m}$, then
$$
\|F\|_\infty =\max_{z\in \mT} \nm F(z)\nm .
$$
This follows from \eqref{norm}, since the maximal ideal space
$\mathfrak{M}$ of $S=C(\mT)$ can be identified with the unit circle as
a topological space; see \cite[Example~11.13.(a)]{Rud91}.

\begin{remark}
  $RH^\infty(\mD)$ is a projective free ring since it is a B\'ezout
  domain. Also $A(\mD)$, $W^+(\mD)$, or $\partial^{-n}H^\infty(\mD)$
  are projective free rings, since their maximal ideal space is
  $\overline{\mD}$, which is contractible; see \cite{BruSas}. Thus if
  $R$ is one of $RH^\infty(\mD)$, $A(\mD)$, $W^{+}(\mD)$ or
  $\partial^{-n}H^\infty(\mD)$, then the set $\mS(R, p, m)$ of plants
  possessing a left and a right coprime factorization coincides with
  the class of plants that are stabilizable by
  \cite[Theorem~6.3]{Qua}.
\end{remark}

\subsection{Almost periodic functions}
\label{subsection_AP}

The algebra $AP$ of complex valued (uniformly) {\em almost periodic
  functions} is the smallest closed subalgebra of $L^\infty(\mR)$ that
contains all the functions $e_\lambda := e^{i \lambda y}$. Here the
parameter $\lambda$ belongs to $\mR$.  For any $f\in AP$, its {\em
  Bohr-Fourier series} is defined by the formal sum
\begin{equation}
\label{eq_BFs}
\sum_{\lambda} f_\lambda e^{i  \lambda y} , \quad y\in \mR,
\end{equation}
where
$$
f_\lambda:= \lim_{N\rightarrow \infty} \frac{1}{2N}
\int_{[-N,N]}   e^{-i \lambda y} f(y)dy, \quad
\lambda \in \mR,
$$
and the sum in \eqref{eq_BFs} is taken over the set $
\sigma(f):=\{\lambda \in \mR\;|\; f_\lambda \neq 0\}$, called the {\em
  Bohr-Fourier spectrum} of $f$. The Bohr-Fourier spectrum of every
$f\in AP$ is at most a countable set.

The {\em almost periodic Wiener algebra} $APW$ is defined as the set
of all $AP$ such that the Bohr-Fourier series \eqref{eq_BFs} of $f$
converges absolutely. The almost periodic Wiener algebra is a Banach
algebra with pointwise operations and the norm $\|f\|:=
\displaystyle\sum_{\lambda \in \mR} |f_\lambda|$.  Set
\begin{eqnarray*}
  AP^+&=& \{ f \in AP \;|\; \sigma(f) \subset [0,\infty)\}\\
  APW^+&=& \{ f \in APW \;|\; \sigma(f) \subset [0,\infty)\}.
\end{eqnarray*}
Then $AP^+$ (respectively $APW^+$) is a Banach subalgebra of $AP$
(respectively $APW$). For each $f\in \inv AP$, we can define the {\em
  average winding number} $w(f)\in \mR$ of $f$ as follows:
$$
w(f)= \lim_{T \rightarrow \infty} \frac{1}{2T}
\bigg( \arg (f(T))-\arg(f(-T))\bigg).
$$
See \cite[Theorem 1, p. 167]{JesTor}.

\begin{lemma}
\label{lemma_AP_111}
Let
\begin{eqnarray*}
  R&:=& \textrm{\em a unital full subring of }AP^+\\
  S&:=& AP, \\
  G&:=& \mR,\\
  \iota&:=& w.
\end{eqnarray*}
Then {\em (A1)-(A4)} are satisfied.
\end{lemma}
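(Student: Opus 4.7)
The plan is to verify the four axioms in turn, with the substantive work being (A3)(I3) and (A4); the latter ultimately reduces to a classical invertibility criterion for $AP^+$.

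For (A1): $R$ is commutative and unital by hypothesis, and the integral domain property is inherited from the standard fact that $AP$ is an integral domain. For (A2): $S = AP$ is a unital commutative complex Banach algebra under the sup-norm, with involution $f^*(y) = \overline{f(y)}$; semisimplicity follows from the identification of $AP$ with $C(b\mR)$ via the Gelfand transform, where $b\mR$ denotes the Bohr compactification of $\mR$. For (A3)(I1)--(I2): choosing continuous real-valued arguments $\arg(a)$, $\arg(b)$ on $\mR$, the sum $\arg(a)+\arg(b)$ serves as a continuous argument of $ab$, yielding (I1) in the limit defining $w$; and $\arg(\bar f)=-\arg(f)$ gives (I2).

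For (A3)(I3): fix $f \in \inv AP$, say $|f| \geq c > 0$ on $\mR$. For any $g \in AP$ with $\|g-f\|_\infty < c$, one has $g/f = 1 + h$ with $\|h\|_\infty < 1$, so $\mathrm{Re}(g/f) > 0$ pointwise on $\mR$. A bounded continuous branch of $\arg(g/f)$ then exists, forcing $w(g/f) = 0$; combining with (I1), $w(g) = w(f)$. Thus $w$ is locally constant.

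For (A4): I would invoke the classical half-line Bohr theorem (see, e.g., \cite{JesTor}), which states that an element $f \in AP^+ \cap \inv AP$ is invertible in $AP^+$ if and only if $w(f) = 0$. Granting this, the verification is routine: if $x \in R \cap \inv S$ is invertible in $R$, then $x^{-1} \in R \subset AP^+$, whence $x$ is invertible in $AP^+$ and the theorem gives $w(x) = 0$; conversely, if $w(x) = 0$, the theorem yields $x^{-1} \in AP^+$, and then the fullness of $R$ in $AP^+$ gives $x^{-1} \in R$. The main obstacle is precisely the invocation of this $AP^+$-invertibility criterion; the remaining verifications parallel the disk-algebra argument of Lemma \ref{lemma_disk_algebra}.
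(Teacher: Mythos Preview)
Your overall approach matches the paper's, but there is one genuine error in your justification of (A1): $AP$ is \emph{not} an integral domain. For instance, $f(y) = \max(0,\sin y)$ and $g(y) = \max(0,-\sin y)$ are continuous $2\pi$-periodic, hence in $AP$, yet $fg = 0$ with neither factor zero. What you actually need is that $AP^+$ is an integral domain, which it is: every element of $AP^+$ extends to a bounded holomorphic function on the upper half-plane (since $e^{i\lambda(y+it)} = e^{i\lambda y}e^{-\lambda t}$ is bounded for $\lambda \geq 0$, $t>0$), and the identity principle for holomorphic functions then rules out zero divisors. Since $R \subset AP^+$, this suffices. The fix is trivial, but the stated reason is wrong.

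Apart from this, your argument is sound and essentially parallel to the paper's. Your direct proof of (A3)(I3) via $\operatorname{Re}(g/f)>0$ and a bounded argument is a clean self-contained alternative; the paper instead cites \cite[Theorem~2.6, Example~2.10]{Mur} for the fact that $w$ is a topological index on $AP$, hence locally constant. For (A4), the paper does not attribute the $AP^+$-invertibility criterion to \cite{JesTor}; that reference is used only for the existence of $w$. Instead the paper assembles the criterion from Callier--Desoer \cite[Theorem~1, p.~776]{CalDes} (which characterizes $\inf_{\operatorname{Im}(s)\geq 0}|f(s)|>0$ in terms of $\inf_{y\in\mR}|f(y)|>0$ together with $w(f)=0$) combined with the corona theorems for $AP$ \cite[Exercise~18, p.~24]{Gam} and for $AP^+$ \cite[Theorems~3.1,~4.3]{Bot} to translate those infimum conditions into invertibility. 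Your single invocation and the paper's layered derivation reach the same conclusion; just be aware that the precise statement you need may not appear verbatim in \cite{JesTor}.
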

\begin{proof} (A1) is clear. The involution $\cdot^{*}$ used in (A2) is defined by
$$
f^*(y)=\overline{f(y)}, \quad y \in \mR,
$$
for $f\in AP$.  (A3)(I1) and (A3)(I2) follow from the
  definition of $w$. (A3)(I3) follows for example from \cite[Theorem~2.6 and Example~2.10]{Mur}, where
it is shown that $w$ is a {\em topological index} on $AP$, and hence in particular, it is
locally constant.

Finally, (A4) follows from
  \cite[Theorem~1,~p.776]{CalDes} which says that $f\in AP^+$
  satisfies
\begin{equation}
\label{cc_AP+}
\inf_{\textrm{Im}(s)\geq 0} |f(s)| >0
\end{equation}
iff $\displaystyle \inf_{y\in \mR}|f(y)| >0$ and $w(f)=0$.  But
$$
\displaystyle \inf_{y\in \mR}|f(y)| >0
$$
is equivalent to $f$ being an invertible element of $AP$ by the corona
theorem for $AP$ (see for example \cite[Exercise~18,~p.24]{Gam}). Also
the equivalence of \eqref{cc_AP+} with that of the invertibility of
$f$ as an element of $AP^+$ follows from the Arens-Singer corona
theorem for $AP^+$ (see for example \cite[Theorems~3.1,~4.3]{Bot}).
Finally, the invertibility of $f \in R$ in $R$ is equivalent to the
invertibility of $f$ as an element of $AP^+$ since $R$ is a full
subring of $AP^+$.
\end{proof}

\begin{remark}
\label{examples_of_R_in_AP+}
Specific examples of such $R$ are $AP^+$ and $APW^+$. More generally,
let $\Sigma \subset [0,+\infty)$ be an {\em additive semigroup} (if
$\lambda,\mu \in \Sigma $, then $\lambda+\mu \in \Sigma$) and suppose
$0 \in \Sigma$.  Denote
\begin{eqnarray*}
  AP_\Sigma&=& \{ f \in AP \;|\; \sigma(f) \subset \Sigma\}\\
  APW_\Sigma&=& \{ f \in APW \;|\; \sigma(f) \subset \Sigma\}.
\end{eqnarray*}
Then $AP_\Sigma$ (respectively $APW_\Sigma$) is a unital Banach
subalgebra of $AP^+$ (respectively $APW^+$). Let
$\overline{Y_{\Sigma}}$ denote the set of all maps $\theta:\Sigma
\rightarrow [0,+\infty]$ such that $ \theta(0)=0$ and $\theta
(\lambda+\mu)= \theta (\lambda)+ \theta(\mu)$ for all $\lambda, \mu
\in \Sigma$.  Examples of such maps $\theta$ are the following. If
$y\in [0,+\infty)$, then $\theta_y$, defined by $\theta_y(\lambda)=
\lambda y$, $\lambda \in \Sigma$, belongs to $\overline{Y_{\Sigma}}$.
Another example is $\theta_{\scriptscriptstyle \infty}$, defined as follows:
$$
\theta_{\scriptscriptstyle \infty}(\lambda)= \left\{ \begin{array}{ll}
0 & \textrm{if } \lambda =0,\\
+\infty & \textrm{if } \lambda \neq 0.
\end{array}\right.
$$
So in this way we can consider $[0,+\infty]$ as a subset of
$\overline{Y_{\Sigma}}$.

The results \cite[Proposition~4.2, Theorem 4.3]{Bot} say that if
$\overline{Y_{\Sigma}} \subset [0,+\infty]$, and $f$ belongs to $ AP_\Sigma$
(respectively to $APW_\Sigma$), then $f$ belongs to $ \inv AP_\Sigma$ (respectively
 to $ \inv APW_\Sigma$) iff \eqref{cc_AP+} holds. So in this case
$AP_\Sigma$ and $APW_\Sigma$ are unital full subalgebras of $AP^+$.
\end{remark}

In the definition of the $\nu$-metric given in
Definition~\ref{def_nu_metric} corresponding to
Lemma~\ref{lemma_AP_111}, the $\|\cdot\|_\infty$ now means the
following: if $F\in (AP)^{p\times m}$, then
$$
\|F\|_\infty =\sup_{y\in \mR} \nm F(y)\nm .
$$
This follows from \eqref{norm}, since $\mR$ is dense in the maximal
ideal space $\mathfrak{M}$ (which is the Bohr compactification $\mR_B$
of $\mR$) of the Banach algebra $S=AP$; see \cite[Exercise 18, p.24]{Gam}.

\begin{remark}
  It was shown in \cite{BruSas} that $AP^+$ and $APW^+$ are projective
  free rings.  Thus if $R=AP^+$ or $APW^+$, then the set $\mS(R, p,
  m)$ of plants possessing a left and a right coprime factorization
  coincides with the class of plants that are stabilizable by
  \cite[Theorem~6.3]{Qua}.
\end{remark}

\subsection{Algebras of Laplace transforms of measures without a
  singular nonatomic part}  \label{subsection_CD}

Let $\mC_{+}:=\{s\in \mC\;|\; \textrm{Re}(s)\geq 0\}$ and let
$\calA^+$ denote the Banach algebra

$
\calA^+=\left\{ s (\in \mC_{+}) \mapsto \widehat{f_a}(s)
  +\displaystyle \sum_{k=0}^\infty f_k e^{- s t_k} \; \bigg| \;
\begin{array}{ll}
f_a \in L^{1}(0,\infty), \;(f_k)_{k\geq 0} \in \ell^{1},\\
0=t_0 <t_1 ,t_2 , t_3, \dots
\end{array} \right\}
$

\noindent equipped with pointwise operations and the norm:

$
\|F\|=\|f_a\|_{\scriptscriptstyle L^{1}} +
\|(f_k)_{k\geq 0}\|_{\scriptscriptstyle \ell^1},
\;\; F(s)=\widehat{f_a}(s) +\displaystyle\sum_{k=0}^\infty f_k e^{-st_k}\;\;(s\in \mC_+).
$

\noindent Here $\widehat{f_a}$ denotes the {\em Laplace transform of}
$f_a$, given by
$$
\widehat{f_a}(s)=\displaystyle \int_0^\infty e^{-st} f_a(t)
dt, \quad s \in \mC_+.
$$
Similarly, define the Banach algebra $\calA$ as follows (\cite{GohFel}):

$
\!\!\!\!\!\!\calA\!=\!\left\{ iy (\in i\mR) \mapsto \widehat{f_a}(iy)
  +\!\!\!\displaystyle\sum_{k=-\infty}^\infty f_k e^{- iy t_k} \bigg|
\begin{array}{ll}
f_a \in L^{1}(\mR), \;(f_k)_{k\in \mZ } \in \ell^{1},\\
\dots, t_{-2}, t_{-1}<\!0\!=\!t_0\! <t_1 ,t_2 ,  \dots
\end{array} \!\!\!\right\}
$

\noindent equipped with pointwise operations and the norm:

$
\|F\|=\|f_a\|_{\scriptscriptstyle L^{1}} + \|(f_k)_{k\in
  \mZ}\|_{\scriptscriptstyle \ell^1}, \;\; F(iy):=\widehat{f_a}(iy)
+\displaystyle\sum_{k=-\infty}^\infty f_k e^{-iy t_k}\;\;(y\in \mR).
$

\noindent Here $\widehat{f_a}$ is the {\em Fourier transform of}
$f_a$, $
\widehat{f_a}(iy)= \displaystyle \int_{-\infty}^\infty e^{-iyt} f_a(t)
dt$, ($y \in \mR$).

\noindent It can be shown that $\widehat{L^{1}(\mR)}$ is an ideal of $\calA$.

\noindent For $ F=\widehat{f_a} +\displaystyle\sum_{k=-\infty}^\infty
f_k e^{-i\cdot t_k}\in \calA$, we set
$F_{AP}(iy)=\displaystyle\sum_{k=-\infty}^\infty f_k e^{-iy t_k}$
($y\in \mR$).

\noindent If $F =\widehat{f_a}+F_{AP} \in \inv \calA$, then it can be shown that
$F_{AP}(i \cdot) \in \inv AP$ as follows. First of all, the maximal
ideal space of $\calA$ contains a copy of the maximal ideal space of
$APW$ in the following manner: if $\varphi \in M(APW)$, then the map
$\Phi: \calA \rightarrow \mC$ defined by
$\Phi(F)=\Phi(\widehat{f_a}+F_{AP})=\varphi(F_{AP}(i\cdot))$, ($F \in
\calA$), belongs to $M(\calA)$. So if $F$ is invertible in $\calA$, in
particular for every $\Phi$ of the type describe above, $0 \neq
\Phi(F)=\varphi(F_{AP}(i\cdot))$. Thus by the elementary theory of
Banach algebras, $F_{AP}(i\cdot)$ is an invertible element of $AP$.

Moreover, since $\widehat{L^1(\mR)}$ is an ideal in $\calA$,
$F_{AP}^{-1}\widehat{f_a}$ is the Fourier transform of a function in
$L^{1}(\mR)$, and so the map $y \mapsto
1+(F_{AP}(iy))^{-1}\widehat{f_a}(iy)=\frac{F(iy)}{F_{AP}(iy)}$ has a
well-defined winding number ${\tt w}$ around $0$. Define $W: \inv
\calA \rightarrow \mR\times \mZ$ by
\begin{equation}   \label{Windex}
 W (F)= (w(F_{AP}), {\tt
  w}(1+F_{AP}^{-1} \widehat{f_a}) ),
\end{equation}
 where
$F=\widehat{f_a}+F_{AP} \in \inv \calA$, and
$$
\begin{array}{ll}
w(F_{AP})
:=
\displaystyle \lim_{R \rightarrow \infty} \frac{1}{2R}
\bigg( \arg \big(F_{AP}(iR)\big)-\arg\big(F_{AP}(-iR)\big)\bigg),
\\
{\tt w}(1+F_{AP}^{-1} \widehat{f_a})
:=
\displaystyle \frac{1}{2\pi}\bigg( \arg \big(1+(F_{AP}(iy)\big)^{-1} \widehat{f_a}(iy)  )
\bigg|_{y=-\infty}^{y=+\infty}\bigg).
\end{array}
$$

\begin{lemma}
\label{lemma_inv_A}
$F=\widehat{f_a}+F_{AP} \in \calA$ is invertible iff for all $y\in
\mR$, $F(iy) \neq 0$ and $\displaystyle \inf_{y\in \mR} |F_{AP}(iy)| >0$ .
\end{lemma}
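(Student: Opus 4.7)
The plan is to handle each direction by factoring $F = F_{AP}\bigl(1 + F_{AP}^{-1}\widehat{f_a}\bigr)$ and applying Wiener-type inversion theorems to the two factors.

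For the forward direction, suppose $F \in \inv \calA$. Each point evaluation $F \mapsto F(iy)$ is a character on $\calA$, so $F(iy) \neq 0$ for every $y \in \mR$. Moreover, as explained just before the lemma, the embedding $M(APW) \hookrightarrow M(\calA)$ sending $\varphi$ to the character $\widehat{f_a} + F_{AP} \mapsto \varphi(F_{AP})$ forces $F_{AP}(i\cdot) \in \inv AP$; the corona theorem for $AP$ (cited earlier from \cite[Exercise~18,~p.24]{Gam}) then converts this to $\inf_{y \in \mR} |F_{AP}(iy)| > 0$.

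For the converse, assume the two conditions. I would first invoke the Wiener--Levy theorem for $APW$ --- the almost-periodic analogue of Wiener's $1/f$ theorem, obtainable by transporting to the Bohr compactification $\mR_B$, on which $APW$ becomes the classical Wiener algebra of absolutely convergent Fourier series over a compact abelian group --- to obtain $F_{AP}^{-1} \in APW \subset \calA$. Using that $\widehat{L^1(\mR)}$ is an ideal of $\calA$, write $F_{AP}^{-1}\widehat{f_a} = \widehat{g}$ with $g \in L^1(\mR)$, so that $F = F_{AP}(1 + \widehat{g})$. The hypotheses $F(iy) \neq 0$ and $\inf_y |F_{AP}(iy)| > 0$ force $1 + \widehat{g}$ to be nonvanishing on $i\mR$, and the Riemann--Lebesgue lemma drives $\widehat{g}(iy) \to 0$ as $|y| \to \infty$; hence $1 + \widehat{g}$ is in fact bounded away from zero on $i\mR$. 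The classical Wiener $1/f$ theorem on $\mR$ then supplies $h \in L^1(\mR)$ with $(1+\widehat{g})^{-1} = 1 + \widehat{h}$, and $F^{-1} := (1+\widehat{h})F_{AP}^{-1}$ is the desired inverse inside $\calA$ (since $APW \cdot (\mC + \widehat{L^1(\mR)}) \subset APW + \widehat{L^1(\mR)} \subset \calA$).

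The main technical obstacle is really the Wiener--Levy step for $APW$: producing $F_{AP}^{-1} \in APW$ (not merely in $\inv AP$) from the pointwise lower bound $\inf_y |F_{AP}(iy)| > 0$. Once this is in hand, the remainder is bookkeeping within the ideal structure $\widehat{L^1(\mR)} \triangleleft \calA$ together with the classical Wiener theorem on $\mR$.
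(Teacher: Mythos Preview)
Your proof is correct and follows essentially the same route as the paper: factor $F = F_{AP}\bigl(1 + F_{AP}^{-1}\widehat{f_a}\bigr)$ and invert each factor via Wiener-type theorems (the paper phrases the second step as the ``corona theorem for $\calW = \widehat{L^1(\mR)} + \mC$'', which is exactly your classical Wiener $1/f$ theorem). You are in fact more explicit than the paper on two points it leaves implicit: the ``only if'' direction (dismissed there as clear), and the passage from $F_{AP}\in\inv AP$ to $F_{AP}^{-1}\in APW\subset\calA$, which is needed before one can invoke the ideal property of $\widehat{L^1(\mR)}$ in $\calA$.
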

\begin{proof} The `only if' part is clear. We simply show the `if'
  part below.

  Let $F=\widehat{f_a}+F_{AP} \in \calA$ be such that $F(iy) \ne 0$ for all $y \in {\mathbb R}$ and
$$
\inf_{y\in \mR} |F_{AP}(iy)| >0.
$$
Thus $F_{AP}(i\cdot)$ is invertible as an element of $AP$. Hence
$F=F_{AP}(1+\widehat{f_{a}} F_{AP}^{-1})$ and so it follows that $ (
1+\widehat{f_{a}} F_{AP}^{-1})(iy) \neq 0$ for all $y \in \mR$.  But
by the corona theorem for
$$
\calW:=\widehat{L^1(\mR)}+\mC
$$
(see \cite[Corollary~1,~p.109]{GelRaiShi}), it follows that
$1+\widehat{f_{a}} F_{AP}^{-1}$ is invertible as an element of $\calW$
and in particular, also as an element of $\calA$. This completes the
proof.
\end{proof}

\begin{lemma}
\label{lemma_calA}
Let
\begin{eqnarray*}
  R&:=&\textrm{\em a unital full subring of } \calA^+, \\
  S&:=& \calA, \\
  G&:=& \mR\times \mZ,\\
  \iota&:=& W.
\end{eqnarray*}
Then {\em (A1)-(A4)} are satisfied.
\end{lemma}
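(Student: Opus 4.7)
The plan is to verify (A1)--(A4) in turn, leaning on results already invoked in the preceding subsections and on the Callier--Desoer corona theorem \cite{CalDes}; condition (A4) is the one substantive step.

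For (A1), $\calA^+$ is isomorphic via the Laplace transform to the convolution algebra of measures on $[0,\infty)$ without singular nonatomic part, which is an integral domain by Titchmarsh's convolution theorem, and $R$ inherits the property as a unital subring. For (A2), I take the involution $F^*(iy):=\overline{F(iy)}$; commutativity is clear, and semisimplicity follows because the point evaluations $F\mapsto F(iy)$, $y\in\mR$, already separate elements of $\calA$.

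For (A3), the key algebraic identity is $(FG)_{AP}=F_{AP}G_{AP}$, which holds because $\widehat{L^1(\mR)}$ is an ideal in $\calA$, so both mixed terms $\widehat{f_a}G_{AP}$ and $F_{AP}\widehat{g_a}$ fall into $\widehat{L^1(\mR)}$. This in turn gives $FG/(FG)_{AP}=(F/F_{AP})(G/G_{AP})$, and (I1) then follows from the fact that $w$ on $\inv AP$ (used in Lemma~\ref{lemma_AP_111}) and the classical winding number ${\tt w}$ on invertibles of $\calW=\widehat{L^1(\mR)}+\mC$ are both group homomorphisms. (I2) reduces to checking that complex conjugation reverses each of $w$ and ${\tt w}$. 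For (I3), the projection $F\mapsto F_{AP}$ is norm-decreasing $\calA\to APW$ and $F\mapsto F/F_{AP}$ is continuous $\inv\calA\to\inv\calW$; composing with the already-noted local constancy of $w$ on $\inv AP$ and of ${\tt w}$ on $\inv\calW$ (which embeds in $C(\mR\cup\{\infty\})$) yields local constancy of $W$.

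The heart of the argument is (A4). Fullness of $R$ in $\calA^+$ reduces invertibility in $R$ to invertibility in $\calA^+$, and the Callier--Desoer corona theorem characterizes $f\in\inv\calA^+$ by $\inf_{s\in\overline{\mC_+}}|f(s)|>0$. Given $f\in R\cap\inv\calA$, I need to show this lower-bound condition is equivalent to $W(f)=(0,0)$; note that Lemma~\ref{lemma_inv_A} already ensures $f_{AP}\in\inv AP$ and that $f/f_{AP}\in\calW$ is nonvanishing on $i\mR$. Forward direction: if $f$ is bounded below on $\overline{\mC_+}$, then since $\widehat{f_a}(s)\to 0$ as $|s|\to\infty$ in $\overline{\mC_+}$ and $f_{AP}$ is almost-periodic in the vertical direction, one concludes $\inf_{s\in\overline{\mC_+}}|f_{AP}(s)|>0$, whence the Arens--Singer corona theorem for $AP^+$ (as in Lemma~\ref{lemma_AP_111}) gives $w(f_{AP})=0$; then $f_{AP}^{-1}\in AP^+$ moves $f/f_{AP}=1+f_{AP}^{-1}\widehat{f_a}$ into $\calW^+:=\widehat{L^1(0,\infty)}+\mC$, where it is invertible as a product of invertibles in $\calA^+$, forcing ${\tt w}(f/f_{AP})=0$ by the argument principle. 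Reverse direction: $w(f_{AP})=0$ places $f_{AP}$ in $\inv AP^+$; combined with ${\tt w}(f/f_{AP})=0$ and non-vanishing of $f$ on $i\mR$, the classical Wiener--L\'evy theorem places $f/f_{AP}\in\inv\calW^+$, and multiplying gives $f\in\inv\calA^+$, hence $f\in\inv R$ by fullness.

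The main obstacle I anticipate is the forward step of transferring the uniform lower bound from $f$ to $f_{AP}$ throughout $\overline{\mC_+}$: the decay of $\widehat{f_a}$ handles the region where $|s|$ is large, but on the bounded portion one must exploit the holomorphic extension and vertical almost-periodicity of $f_{AP}$ in $\mC_+$, together with the factorization $f=f_{AP}(1+f_{AP}^{-1}\widehat{f_a})$, to rule out small values of $|f_{AP}|$. Once this lower bound is in hand, Arens--Singer for $AP^+$ and the Wiener--L\'evy theorem for $\calW^+$ complete (A4) cleanly.
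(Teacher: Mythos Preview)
Your proposal is essentially correct and follows the paper's approach closely for (A1)--(A3) and for the reverse direction of (A4). The paper in fact only writes out the direction $W(F)=\circ\Rightarrow F\in\inv\calA^+$, treating the converse as implicit; your explicit treatment of both directions is an improvement in completeness.

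However, the ``main obstacle'' you flag for the forward direction of (A4) is a detour you need not take. You have already observed (for (I1)) that $F\mapsto F_{AP}$ is a unital ring homomorphism $\calA\to APW$, because $\widehat{L^1(\mR)}$ is an ideal. The same reasoning shows that $F\mapsto F_{AP}$ is a unital ring homomorphism $\calA^+\to APW^+$. Hence if $f\in\inv\calA^+$, then automatically $f_{AP}\in\inv APW^+\subset\inv AP^+$, and the $AP^+$ result from Lemma~\ref{lemma_AP_111} gives $w(f_{AP})=0$ at once---no need to transfer a uniform lower bound from $f$ to $f_{AP}$ over $\overline{\mC_+}$. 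The second coordinate follows in the same algebraic way: with $f^{-1}=\widehat{g_a}+G_{AP}\in\calA^+$ and $f_{AP}G_{AP}=1$, one has $(f/f_{AP})^{-1}=f_{AP}f^{-1}=1+f_{AP}\widehat{g_a}\in\calW^+$, so $f/f_{AP}\in\inv\calW^+$ and hence ${\tt w}(f/f_{AP})=0$. This replaces your analytic step (and the invocation of Callier--Desoer for this direction) by a two-line algebraic argument using tools you already set up.
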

\begin{proof} (A1) is clear. The involution $\cdot^*$ in (A2) is defined by
$$
F^*(iy)=\overline{F(iy)}, \quad y \in \mR,
$$
for $F\in \calA$.  (A3)(I2) is now easy to see from the definition of $W$.
Also, (A3)(I1) follows from the
  definition of $W$ as follows. Let $F=\widehat{f_a}+F_{AP}$ and
  $G=\widehat{g_a}+G_{AP}$. Then we have
$$
w(F_{AP}G_{AP})=w(F_{AP})+w(G_{AP})
$$
from the definition of $w$. Thus
\begin{eqnarray*}
W(FG)&=&W(( \widehat{f_a}+F_{AP})(\widehat{g_a}+G_{AP}))\\
&=& W(\widehat{f_a}\widehat{g_a}+\widehat{f_a}G_{AP}+ \widehat{g_a}F_{AP}+ F_{AP}G_{AP})\\
&=& ({\tt w}(1+(F_{AP}G_{AP})^{-1}(\widehat{f_a}\widehat{g_a}
+\widehat{f_a}G_{AP}+ \widehat{g_a}F_{AP}), w(F_{AP}G_{AP}))\\
&=& ({\tt w}((1+F_{AP}^{-1} \widehat{f_a})(1+G_{AP}^{-1} \widehat{g_a})) , w(F_{AP})+w(G_{AP}))\\
&=& ({\tt w}(1+F_{AP}^{-1} \widehat{f_a})+{\tt w}(1+G_{AP}^{-1} \widehat{g_a}),w(F_{AP})+w(G_{AP}))\\
&=& W(\widehat{f_a}+F_{AP})+W(\widehat{g_a}+G_{AP}).
\end{eqnarray*}
So (A3)(I2) holds.

The local constancy of $W$ demanded in (A3)(I3) can be seen in the following manner.
We have already noted that $w$ is locally constant on $\inv AP$ and ${\tt w}$ is
locally constant on $\inv C(\mT)$. Note that ${\tt w} (1+F_{AP}^{-1} \widehat{f_a})$ defined above
 is just ${\tt w}(\varphi)$ where
$$
\varphi(\theta)= (1+F_{AP}^{-1} \widehat{f_a})(iy),
\textrm{ where } iy=\frac{1+e^{i\theta}}{1-e^{i\theta}}, \quad \theta \in(0,2\pi).
$$
Hence (A3)(I3) follows.

Finally we check that (A4) holds. Suppose that $
F=\widehat{f_{a}}+F_{AP}$ belonging to $ \calA^{+} \cap (\inv \calA)$,
is such that $W(F)=0$. Since $F$ is invertible in $\calA$, it follows
that $F_{AP}(i\cdot)$ is invertible as an element of $AP$. But
$w(F_{AP})=0$, and so $F_{AP}(i\cdot)\in AP^+$ is invertible as an
element of $AP^+$. But this implies that $1+F_{AP}^{-1}
\widehat{f_{a}}$ belongs to the Banach algebra
$$
\calW^{+}:=\widehat{L^{1}(0,\infty)}+\mC.
$$
Moreover, it is bounded away from $0$ on $i\mR$ since
$$
1+F_{AP}^{-1} \widehat{f_{a}} =\frac{F}{F_{AP}},
$$
and $F$ is bounded away from zero on $i\mR$. Moreover ${\tt
  w}(1+F_{AP}^{-1} \widehat{f_{a}})=0$, and so it follows that
$1+F_{AP}^{-1} \widehat{f_{a}}$ is invertible as an element of
$\calW^{+}$, and in particular in $\calA^+$. Since
$F=(1+F_{AP}^{-1} \widehat{f_a})F_{AP}$ and we have shown that both
$(1+F_{AP}^{-1} \widehat{f_a})$ as well as $F_{AP}$ are invertible as
elements of $\calA^+$, it follows that $F$ is invertible in $\calA^+$.
\end{proof}

An example of such a $R$ (besides $\calA^+$) is the algebra
$$
\widehat{L^{1}(0,+\infty)}+APW_\Sigma(i\cdot):=\{\widehat{f_a}+F_{AP}:
f_a\in L^{1}(0,+\infty), \; F_{AP}(i\cdot)\in APW_\Sigma \},
$$
where $\Sigma$ is as
described in Remark~\ref{examples_of_R_in_AP+}.

In the definition of the $\nu$-metric given in
Definition~\ref{def_nu_metric} corresponding to
Lemma~\ref{lemma_calA}, the $\|\cdot\|_\infty$ now means the
following: if $F\in \calA^{p\times m}$, then
$$
\|F\|_\infty =\sup_{y\in \mR} \nm F(iy)\nm .
$$
This follows from \eqref{norm}, since $\mR$ is dense in the maximal
ideal space $\mathfrak{M}$ of the Banach algebra $S=\calA$; see
\cite[Theorems 4.20.1 and 4.20.4]{HilPhi}.

\begin{remark}
  It was shown in \cite{BruSas} that $\calA^+$ is a projective free
  ring.  Thus the set $\mS(\calA^+, p, m)$ of plants possessing a left
  and a right coprime factorization coincides with the class of plants
  that are stabilizable by \cite[Theorem~6.3]{Qua}.
\end{remark}

\subsection{The polydisk algebra}  \label{subsection_polydisk}

Let
\begin{eqnarray*}
\mD^n&:=& \{ (z_1, \dots, z_n)\in \mC^n: |z_i| <1 \textrm{ for } i=1,\dots, n\},\\
\overline{\mD^n}&:=& \{ (z_1, \dots, z_n)\in \mC^n: |z_i| \leq 1\textrm{ for } i=1,\dots, n\},\\
\mT^n&:=& \{ (z_1, \dots, z_n)\in \mC^n: |z_i|=1\textrm{ for } i=1,\dots, n\}.
\end{eqnarray*}
The {\em polydisk algebra} $A(\mD^n)$ is the set of all functions $f:
\overline{\mD^n} \rightarrow \mC$ such that $f$ is holomorphic in $\mD^n$
and continuous on $\overline{\mD^n}$.

If $f\in A(\mD^n)$, then the function $f_{d}$ defined by $z\mapsto
f(z,\dots, z): \overline{\mD} \rightarrow \mC$ belongs to the disk
algebra $A(\mD)$, and in particular also to $C(\mT)$. The map
$$
f\mapsto (f|_{\mT^n}, f_d): A(\mD^n) \rightarrow C(\mT^n) \times C(\mT)
$$
is a ring homomorphism. This map is also injective, and this is an
immediate consequence of Cauchy's formula; see \cite[p.4-5]{Rud69}.
We recall the following result; see \cite[Theorem~4.7.2, p.87]{Rud69}.

\begin{proposition}
\label{Rud_prop}
  Suppose that $\Psi=(\psi_1, \dots, \psi_n)$ is a continuous map from
  $\overline{\mD}$ into $\overline{\mD^n}$, which carries $\mT$ into
  $\mT^n$ and the winding number of each $\psi_i$ is positive. Then
  for every $f\in A(\mD^n)$, $f(\Psi(\overline{\mD}) \cup \mT^n)=
  f(\overline{\mD^n})$.
\end{proposition}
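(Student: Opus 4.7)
My plan is to reduce the equality of image sets to a one-variable winding number argument. The inclusion $f(\Psi(\overline{\mD}) \cup \mT^n) \subseteq f(\overline{\mD^n})$ is trivial since both pieces on the left sit inside $\overline{\mD^n}$. For the reverse inclusion, fix $c \in f(\overline{\mD^n})$; if $c \in f(\mT^n)$ we are done, so assume $c \notin f(\mT^n)$, i.e., $f - c$ is nowhere zero on the distinguished boundary $\mT^n$. Set $\phi(z) := f(\Psi(z)) - c$. Then $\phi \in C(\overline{\mD})$ (not holomorphic, since $\Psi$ is merely continuous) and $\phi$ is nowhere zero on $\mT$ because $\Psi(\mT) \subseteq \mT^n$. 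The goal becomes to show that the winding number of $\phi|_\mT$ about $0$ is strictly positive; once this is established, the standard topological fact that a continuous map $\overline{\mD} \to \mC$ which is nowhere zero on $\mT$ and has nonzero boundary winding must vanish somewhere in $\mD$ yields $c \in f(\Psi(\mD))$, finishing the proof.

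For the winding computation I would use functoriality of $\pi_1$. The loop $\Psi|_\mT : \mT \to \mT^n$ represents the class $(k_1, \ldots, k_n) \in \pi_1(\mT^n) = \mZ^n$, all entries strictly positive by hypothesis. Composition with $(f-c)|_{\mT^n} : \mT^n \to \mC \setminus \{0\}$ induces a group homomorphism $\mZ^n \to \mZ = \pi_1(\mC \setminus \{0\})$, hence a linear form $(a_1, \ldots, a_n) \mapsto \sum_{i=1}^n d_i a_i$. Here $d_i$ denotes the winding around $0$ of the one-variable function $z \mapsto f(p_1, \ldots, p_{i-1}, z, p_{i+1}, \ldots, p_n) - c$ for any chosen basepoint $(p_j)_{j \neq i} \in \mT^{n-1}$; independence of basepoint follows from local constancy of winding numbers together with connectedness of $\mT^{n-1}$. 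Each such slice lies in $A(\mD)$ and is nowhere zero on $\mT$, so by the classical one-variable argument principle $d_i$ equals the count of zeros in $\mD$, and in particular $d_i \geq 0$. Consequently, the winding of $\phi|_\mT$ equals $\sum_{i=1}^n k_i d_i \geq 0$, and positivity reduces to showing that at least one $d_i$ is strictly positive.

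The main obstacle is this final positivity claim, which I would handle by contradiction. Assume every $d_i = 0$; I will show by induction on $k$ that $c \notin f(A_k)$, where $A_k := \overline{\mD^k} \times \mT^{n-k}$. Setting $k = n$ then contradicts $c \in f(\overline{\mD^n})$. The base case $k = 0$ is the hypothesis $c \notin f(\mT^n)$. For the inductive step, fix $(z_1, \ldots, z_{k-1}) \in \overline{\mD^{k-1}}$ and $(z_{k+1}, \ldots, z_n) \in \mT^{n-k}$ and examine $h(z) := f(z_1, \ldots, z_{k-1}, z, z_{k+1}, \ldots, z_n) - c$, which lies in $A(\mD)$. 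The inductive hypothesis gives $h \neq 0$ on $\mT$ (since the argument of $f$ then ranges over $A_{k-1}$), so its boundary winding is defined; local constancy in the parameters together with connectedness of $\overline{\mD^{k-1}} \times \mT^{n-k}$ force this winding to be independent of the parameters, and specializing to $(z_1, \ldots, z_{k-1}) \in \mT^{k-1}$ identifies it with $d_k = 0$. The one-variable argument principle then forces $h$ to have no zeros in $\mD$, so $c \notin f(\overline{\mD^{k-1}} \times \mD \times \mT^{n-k})$; combining with the inductive hypothesis gives $c \notin f(A_k)$ as required. The resulting contradiction forces some $d_i$ to be strictly positive, so $\sum_{i=1}^n k_i d_i > 0$, and the proof concludes.
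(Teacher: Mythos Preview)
The paper does not prove this proposition; it merely quotes it as Theorem~4.7.2 from Rudin's \emph{Function Theory in Polydiscs} and uses it as a black box in the verification of (A4) for the polydisk algebra. So there is no ``paper's own proof'' to compare against.

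That said, your argument is correct and is essentially the standard proof (and close in spirit to Rudin's). The reduction to the one-variable topological lemma via $\phi = (f-c)\circ\Psi$ is the right move, and your computation of the boundary winding as $\sum_i k_i d_i$ via functoriality of $\pi_1$ is clean. The inductive ``peeling'' argument showing that if all $d_i=0$ then $c\notin f(\overline{\mD^n})$ is the key analytic input and you have it right: at each step the slice function $h$ lies in $A(\mD)$, is zero-free on $\mT$ by the inductive hypothesis, and the constancy of its winding over the connected parameter space $\overline{\mD^{k-1}}\times\mT^{n-k}$ lets you transport the assumption $d_k=0$ from the distinguished boundary inward. One minor stylistic point: you might make explicit that the ``standard topological fact'' you invoke is simply that a nowhere-vanishing continuous $\phi$ on $\overline{\mD}$ has $\phi|_\mT$ null-homotopic in $\mC\setminus\{0\}$ via $t\mapsto \phi(tz)$, hence winding zero; but this is routine.
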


\begin{lemma}
\label{lemma_polydisk_algebra}
  Let
\begin{eqnarray*}
  R&=& \textrm{\em a unital full subring of }A(\mD^n),\\
  S&:=& C(\mT^n) \times C(\mT), \\
  G&:=& \mZ, \\
  \iota&:=&((g, h) \mapsto {\tt w}(h)) .
\end{eqnarray*}
Then {\em (A1)-(A4)} are satisfied.
\end{lemma}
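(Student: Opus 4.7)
The plan is to mirror the argument of Lemma 5.2 (the disk algebra case), adapting it to the polydisk by exploiting the diagonal embedding and Proposition 5.6. Property (A1) is immediate, since any subring of the integral domain $A(\mD^n)$ is an integral domain. For (A2), take the involution componentwise, $(g,h)^* = (\overline{g}, \overline{h})$; the product $C(\mT^n)\times C(\mT)$ is a commutative unital complex Banach algebra, and it is semisimple (its maximal ideal space is $\mT^n \sqcup \mT$, on which the Gelfand transform is just evaluation). The inclusion $R \hookrightarrow S$ is the ring homomorphism $f \mapsto (f|_{\mT^n}, f_d)$, which is injective by Cauchy's formula as recorded in the paragraph preceding Proposition~\ref{Rud_prop}.

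For (A3), note that $(\inv S) = \inv C(\mT^n) \times \inv C(\mT)$. Properties (I1) and (I2) follow from the corresponding properties of the winding number ${\tt w}$ on $\inv C(\mT)$: multiplicativity and the identity ${\tt w}(\overline{h}) = -{\tt w}(h)$. Local constancy (I3) follows because $(g,h) \mapsto h$ is continuous from $\inv S$ (with the product norm) to $\inv C(\mT)$, and ${\tt w}: \inv C(\mT) \to \mZ$ is locally constant by the disk algebra argument already cited in Lemma~\ref{lemma_disk_algebra}.

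The substantive point is (A4). For the easy direction, if $f \in R$ is invertible in $R$, then $f$ is invertible in $A(\mD^n)$, so $f_d$ is invertible in $A(\mD)$; by the disk algebra case, ${\tt w}(f_d) = 0$, i.e., $\iota(f) = 0$.

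Conversely, suppose $f \in R \cap (\inv S)$ with ${\tt w}(f_d) = 0$. The main obstacle is to promote this single winding number condition to zero-free-ness of $f$ on all of $\overline{\mD^n}$; Proposition~\ref{Rud_prop} is precisely the tool. Since $f \in \inv S$, the restriction $f|_{\mT^n}$ has no zeros on $\mT^n$, and $f_d \in \inv C(\mT)$. Combining $f_d \in A(\mD)$, $f_d \in \inv C(\mT)$, and ${\tt w}(f_d) = 0$, the disk algebra lemma (Lemma~\ref{lemma_disk_algebra}) yields that $f_d$ is invertible in $A(\mD)$, hence nonvanishing on $\overline{\mD}$. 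Now apply Proposition~\ref{Rud_prop} with the diagonal map $\Psi(z) = (z,\dots,z)$, which sends $\overline{\mD}$ into $\overline{\mD^n}$ and $\mT$ into $\mT^n$, and whose component winding numbers are all $1 > 0$. The conclusion $f(\Psi(\overline{\mD}) \cup \mT^n) = f(\overline{\mD^n})$ combined with $f|_{\mT^n}$ and $f_d = f\circ \Psi$ both being nonvanishing shows $0 \notin f(\overline{\mD^n})$. Hence $f$ is invertible in $A(\mD^n)$, and since $R$ is a full subring of $A(\mD^n)$, $f$ is invertible in $R$. This completes the verification of (A4), and hence the proof of the lemma.
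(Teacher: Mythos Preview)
Your proof is correct and follows essentially the same approach as the paper: the diagonal embedding $\Psi(z)=(z,\dots,z)$ together with Proposition~\ref{Rud_prop} reduces (A4) to the disk algebra case, and fullness of $R$ in $A(\mD^n)$ finishes the argument. You are slightly more explicit than the paper in spelling out the inclusion $R\hookrightarrow S$ via $f\mapsto(f|_{\mT^n},f_d)$, the verification of (A3), and the final appeal to fullness, but the substance is identical.
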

\begin{proof} (A1) is clear. The involution $\cdot^*$ in (A2) is
  defined as follows: if $(f,g)\in C(\mT^n) \times C(\mT)$, then
  $(f,g)^*:=(f^*, g^*)$, where
\begin{eqnarray*}
f^*(z_1,\dots, z_n)&=&\overline{f(z_1,\dots, z_n)}, \quad  (z_1,\dots, z_n)\in \mT^n, \\
g^*(z)&=&\overline{g(z)}, \quad  z\in \mT.
\end{eqnarray*}
(A3) was proved earlier in Subsection~\ref{subsection_disk_algebra}.
Finally, we will show below that (A4) holds, following
\cite{DecMurSae}.

  Suppose that $f\in A(\mD^n)$ is such that $f|_{\mT^n} \in \inv
  C(\mT^n)$, $f_d \in \inv C(\mT)$ and that ${\tt w}(f_d)=0$. We use
  Proposition~\ref{Rud_prop}, with $\Psi(z):=(z,\dots, z)$ ($z\in
  \overline{\mD}$). Then we know that $f$ will have no zeros in
  $\overline{\mD^n}$ if $f(\Psi (\overline{\mD}))$ does not contain
  $0$. But since $f_d \in \inv C(\mT)$ and ${\tt w}(f_d)=0$, it
  follows that $f_d$ is invertible as an element of $A(\mD)$ by the
  result in Subsection~\ref{subsection_disk_algebra}. But this implies
  that $f(\Psi (\overline{\mD}))$ does not contain $0$.

  Now suppose that $f\in A(\mD^n)$ with $f|_{\mT^n} \in \inv
  C(\mT^n)$, $f_d \in \inv C(\mT)$, and that it is invertible as an
  element of $A(\mD^n)$. But then in particular, $f_d$ is an
  invertible element of $A(\mD)$, and so again by the result in
  Subsection~\ref{subsection_disk_algebra}, it follows that ${\tt
    w}(f_d)=0$.
\end{proof}

\noindent  Besides $A(\mD^n)$ itself, another example of such an $R$
is $RH^\infty(\mD^n)$, the set of all rational functions without
  poles in $\overline{\mD^n}$.

In the definition of the $\nu$-metric given in
Definition~\ref{def_nu_metric} corresponding to
Lemma~\ref{lemma_polydisk_algebra}, the $\|\cdot\|_\infty$ now means
the following: if $F=(G,H)\in (C(\mT^n)\times C(\mT))^{p\times m}$,
then
$$
\|F\|_\infty =\max \bigg\{\displaystyle \max_{z\in \mT^n} \nm G(z)\nm,
\displaystyle\max_{w\in \mT} \nm H(w)\nm \bigg\} .
$$
This follows from \eqref{norm}, since the maximal ideal space
$\mathfrak{M}$ of the Banach algebra $S=C(\mT^n)\times C(\mT)$ can be identified with
$\mT^n \cup \mT$.

\begin{remark}
  By \cite{BruSas}, it follows that $A(\mD^n)$ is a projective free
  ring, since its maximal ideal space the polydisk $\overline{\mD^n}$
  is contractible.  Thus the set $\mS(A(\mD^n), p, m)$ of plants
  possessing a left and a right coprime factorization coincides with
  the class of plants that are stabilizable by
  \cite[Theorem~6.3]{Qua}.
\end{remark}

\begin{remark}

Roughly, the index function $\iota \colon \inv S \to G$ in all the examples given above (Sections
\ref{subsection_disk_algebra}--\ref{subsection_polydisk}) can be viewed as generalizations of the winding number for a
continuous nonvanishing function on the unit circle.  Another important application of such index functions, apart from
robust control theory as presented here, is to the Fredholm theory of various classes of operators (e.g., Toeplitz,
Wiener-Hopf, convolution)  associated with the function.  In this context we mention that Murphy \cite{Mur} has given
an abstract quantized $C^*$-algebra setting which, among other things,  unifies the connection between analytic index
and Fredholm index for the $C({\mathbb T})$-setting of Section \ref{subsection_disk_algebra} and the $AP$-setting of
Section~\ref{subsection_AP}.  There has also been a substantial amount of other work (see the books \cite{BS, BK})
where the analytic index has been extended to more general classes of functions (e.g. piecewise-continuous) in order to
develop the Fredholm theory for more general classes of Toeplitz operators.  On the other hand, the index theory for
semi-almost periodic symbols (a version of the Callier-Desoer class where $\widehat{f}_a$ is only required to be
continuous on the extended imaginary line and where $f -{\widehat f}_a$ is required only to be $AP$ rather than $APW$)
follows a different more complicated path rather than making use of the index function $W$ as in \eqref{Windex}.
Similarly, the Fredholm theory for Toeplitz operators on the quarter plane (associated with continuous functions on the
bitorus ${\mathbb T}^2$) (see \cite[Chapter~8]{BS}) makes use of the ${\mathbb Z}^2$-valued index associated with the
winding number of a function $f$ on ${\mathbb T}^2$ taken with respect to each variable separately, rather than with
the index $\iota$ as in Lemma~\ref{lemma_polydisk_algebra}.
\end{remark}

\section{Further directions}  \label{section_further}

\noindent It was shown in \cite{Vin} that when $R$ comprised rational functions
without poles in the closed unit disk, then the bound established in
Theorem~\ref{theorem_rst} is the best possible one in the following sense:
\medskip

\parbox[r]{12cm}{(P$'$): $C$ satisfying $\mu_{P_0,C}>m$ stabilizes $P$
  only if $d_{\nu}(P,P_0)\leq m$.}

\medskip

\noindent Since this property of $d_\nu$ already holds in the rational case, we
expect the same to hold also in the specific examples considered in
the previous section. We leave the question of investigation of
whether the property (P$'$) always holds in our abstract setup for
future work.

\end{document}